\date{\today}
\def\End{{\rm End}}
\def\w{\wedge}
\def\dbar{\bar\partial}
\def\Cn{\C^n}
\def\Hom{{\rm Hom\, }}
\def\Im{{\rm Im\, }}
\def\O{{\mathcal O}}
\def\xk{X_k}
\def\ann{\text{ann}\,}
\def\1{\mathbf 1}
\def\lcm{\text{lcm}\;}
\def\m{{\mathfrak m}}
\def\be{\begin{equation}}
\def\ee{\end{equation}}
\def\N{{\mathbf N}}
\def\Z{{\mathbf Z}}
\def\R{{\mathbf R}}
\def\C{{\mathbf C}}
\def\F{{\mathbf F}}
\def\a{\mathfrak{a}}
\def\b{\mathfrak{b}}
\def\Ok{\mathcal{O}}
\DeclareMathOperator{\spann}{span}
\DeclareMathOperator{\hull}{hull}
\DeclareMathOperator{\sign}{sgn}
\DeclareMathOperator{\sgn}{sgn}
\DeclareMathOperator{\Vol}{Vol}
\newtheorem{thm}{Theorem}[section]
\newtheorem{lma}[thm]{Lemma}
\newtheorem{prop}[thm]{Proposition}
\theoremstyle{definition}
\newtheorem{df}[thm]{Definition}
\numberwithin{figure}{section}
\theoremstyle{remark}
\newtheorem{preremark}[thm]{Remark}
\newtheorem{preex}[thm]{Example}
\newenvironment{remark}{\begin{preremark}}{\qed\end{preremark}}
\newenvironment{ex}{\begin{preex}}{\qed\end{preex}}
\numberwithin{equation}{section}
\begin{document}

\title[Computing residue currents using comparison formulas]{Computing residue currents of monomial ideals using comparison formulas}

\date{\today}

\author{Richard L\"ark\"ang \& Elizabeth Wulcan}

\address{Department of Mathematics\\Chalmers University of Technology and the University of
G\"oteborg\\S-412 96 G\"OTEBORG\\SWEDEN}

\email{larkang@chalmers.se \& wulcan@chalmers.se}

\subjclass{32A27, 13D02}

\keywords{}

\begin{abstract}
Given a free resolution of an ideal $\mathfrak{a}$ of holomorphic functions, one
can construct a vector-valued residue current $R$, which coincides with
the classical Coleff-Herrera product if $\mathfrak{a}$ is a complete
intersection ideal and whose annihilator ideal
is precisely ~$\mathfrak{a}$. 

We give a complete description of $R$ in the case when $\mathfrak{a}$ is an
Artinian monomial ideal and the resolution is the hull
resolution (or a more general cellular resolution). 
The main ingredient in the proof is a comparison formula for
residue currents due to the first author.

By means of this description, we obtain in the monomial case a
current version of a factorization of the fundamental cycle of $\a$
due to Lejeune-Jalabert. 
\end{abstract}

\maketitle
\section{Introduction}\label{intro}

With a regular sequence $f_1,\ldots, f_p$ of holomorphic functions at the origin
in $\Cn$, there is a canonical associated residue current, the
\emph{Coleff-Herrera product} $R^f_{CH}=\dbar[1/f_p]\w\cdots\w\dbar
[1/f_1]$, introduced in \cite{CH}. 
It has support on $\{f_1=\ldots = f_p=0\}$ and satisfies the
\emph{duality principle (\cite{DS,P}): A holomorphic function $\xi$ is locally
in the ideal $(f)$ generated by $f_1, \ldots, f_p$ if and only if
$\xi$ annihilates $R_{CH}^f$, i.e., 
$\xi R^f_{CH}=0$}. 
Given a free resolution of an ideal (sheaf) $\a$ of holomorphic
functions, Andersson and the second author constructed in \cite{AW} a
vector-valued residue
current $R$ that satisfies the duality principle and that coincides with $R^f_{CH}$
if $\a$ is a complete intersection ideal, generated by a regular sequence $f_1,\ldots, f_p$, see Section ~\ref{residue}.  
This construction has recently been used, e.g., to obtain new results
for the $\dbar$-equation and effective
solutions to polynomial ideal membership problems on singular
varieties, see, e.g., \cite{AS, AS2, ASS, AW4, Sz}.

In this paper we compute the current $R$ for the \emph{hull resolution} (and
more general cellular resolutions), introduced by Bayer-Sturmfels \cite{BaS}, of Artinian,
i.e., $0$-dimensional, monomial ideals, extending previous results by
the second author. 
The hull resolution of a monomial ideal $M$  
is encoded in
the \emph{hull complex} $\hull (M)$, which is a labeled polyhedral cell
complex in $\R^n$ of dimension $n-1$ with one vertex for each minimal
generator of $M$. The face $\sigma\in \hull (M)$ is labeled by the least common multiple of the monomials corresponding to the vertices of $\sigma$, 
see Section ~\ref{cell}.

\begin{thm}\label{main}
Let $M$ be an Artinian monomial ideal in $\C^n$ and let $R$ be the
residue current constructed from the hull resolution of $M$. Then $R$ has one entry
$R_\sigma$ for each $(n-1)$-dimensional face $\sigma$ of $\hull (M)$, and 
\begin{equation*}
R_\sigma=
\dbar\left [\frac{1}{z_n^{\alpha_n}}\right]\w\cdots\w 
\dbar\left [\frac{1}{z_1^{\alpha_1}}\right],
\end{equation*}
where $z_1^{\alpha_1}\cdots z_n^{\alpha_n}$ is the label of $\sigma$. 
\end{thm}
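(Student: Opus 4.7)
The plan is to compute each entry $R_\sigma$ of the current $R$ by relating the hull resolution of $M$ to an auxiliary Koszul resolution built from the label of $\sigma$, and then invoking the first author's comparison formula to transfer the known Coleff-Herrera description of the Koszul current to $R_\sigma$.

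First, a bookkeeping observation. Since $M$ is Artinian of codimension $n$ in $\C^n$, the Andersson-Wulcan current $R$ associated with the hull resolution is concentrated in bidegree $(0,n)$, with values in the top free module $E_n$. By the definition of the hull resolution, $E_n$ has a distinguished basis indexed by the $(n-1)$-dimensional faces of $\hull(M)$, so one may write $R=\sum_\sigma R_\sigma\, e_\sigma$ for scalar $(0,n)$-currents $R_\sigma$; these are the entries to be identified. Now fix a top face $\sigma$ with label $m_\sigma=z_1^{\alpha_1}\cdots z_n^{\alpha_n}$. Since $\sigma$ is top-dimensional and $M$ is Artinian, every $\alpha_i$ is strictly positive, so the pure powers $z_1^{\alpha_1},\ldots,z_n^{\alpha_n}$ form a regular sequence. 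The Koszul complex $K_\bullet$ on this regular sequence is simultaneously a free resolution of the complete intersection $J_\sigma=(z_1^{\alpha_1},\ldots,z_n^{\alpha_n})$ and, as the labelled $(n-1)$-simplex on its vertices, its own hull resolution. By the identification recalled in the introduction, the Andersson-Wulcan current for $K_\bullet$ is the Coleff-Herrera product $\dbar[1/z_n^{\alpha_n}]\wedge\cdots\wedge\dbar[1/z_1^{\alpha_1}]$, so the theorem reduces to the claim that $R_\sigma$ coincides with this current.

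To connect the two, one exhibits a chain map between the Koszul resolution of $J_\sigma$ and the hull resolution of $M$, lifting the natural comparison of underlying ideals encoded by the label of $\sigma$ (for a top face of the hull complex of an Artinian monomial ideal, one expects the inclusion $M\subset J_\sigma$, which at the quotient level yields $\O/M\twoheadrightarrow\O/J_\sigma$ and lifts to the desired chain map). The first author's comparison formula then expresses one residue current in terms of the other, up to explicit correction currents built from the chain map and auxiliary data of the two resolutions. The conclusion $R_\sigma=\dbar[1/z_n^{\alpha_n}]\wedge\cdots\wedge\dbar[1/z_1^{\alpha_1}]$ would follow once one projects onto the $e_\sigma$-component and shows that the correction terms do not contribute there.

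The main obstacle is precisely this last step: controlling the correction terms on the $e_\sigma$-component. Contributions coming from faces $\tau\neq\sigma$ of $\hull(M)$ should, after projection onto $e_\sigma$, involve Coleff-Herrera-type expressions carrying ``redundant'' monomial factors inherited from the labels of other faces; these vanish by the standard annihilation rule $z_i^{\alpha_i}\,\dbar[1/z_i^{\alpha_i}]=0$. Making this combinatorial-to-distributional translation rigorous, while simultaneously tracking signs coming from orientations of polyhedral cells in the hull complex, is where the bulk of the technical work should lie.
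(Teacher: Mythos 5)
There is a genuine gap, and it lies in the direction of your comparison. As you note, for a top face $\sigma$ the label gives the \emph{larger} ideal: $M\subset J_\sigma=(z_1^{\alpha_1},\ldots,z_n^{\alpha_n})$. The comparison formula \eqref{tricket} is set up for a chain map $a_\bullet$ lifting the surjection $\Ok/\b\twoheadrightarrow\Ok/\a$ with $\b\subset\a$, and it reads $R^{E}a_{-1}=a_{n-1}R^{F}$ where $F_\bullet$ resolves the smaller ideal $\b$ and $E_\bullet$ the larger one $\a$. With $\b=M$ and $\a=J_\sigma$ this yields
$\dbar[1/z_n^{\alpha_n}]\w\cdots\w\dbar[1/z_1^{\alpha_1}]=a_{n-1}R^{\hull(M)}=\sum_\tau (a_{n-1})_\tau R_\tau$,
a single scalar identity expressing the \emph{known} Coleff--Herrera current as a combination of all the unknown entries $R_\tau$. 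There is no ``$e_\sigma$-component'' to project onto on the known side, so the individual $R_\sigma$ cannot be extracted this way. Relatedly, your worry about correction terms is misplaced: since $M$ and $J_\sigma$ are both Cohen--Macaulay of codimension $n$ and both resolutions have (minimal) length $n$, the clean formula \eqref{tricket} applies with no extra terms; and the vanishing mechanism you propose ($z_i^{\alpha_i}\dbar[1/z_i^{\alpha_i}]=0$) is not what occurs in a correct computation.

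The fix is to run the comparison the other way, against a complete intersection \emph{contained in} $M$. Since $M$ is Artinian, some pure powers $z_1^{b_1},\ldots,z_n^{b_n}$ occur among its minimal generators; let $N=(z_1^{b_1},\ldots,z_n^{b_n})\subset M$. Its hull complex is the single labelled $(n-1)$-simplex $\Delta$, and $\hull(M)$ embeds as a labelled refinement of it. One then writes down the chain map from the Koszul complex of $N$ \emph{up to} the hull resolution of $M$ explicitly in terms of the refinement, $e_\sigma\mapsto\sum_{\sigma'\subset\sigma}\sign(\sigma',\sigma)\,(m_\sigma/m_{\sigma'})\,e_{\sigma'}$; verifying that this commutes with the differentials is the real technical content (it reduces to sign identities for facets of refined oriented cells, using that every codimension-one cell interior to $|X|$ lies in exactly two top cells). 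The comparison formula then gives $R^{\hull(M)}=a_{n-1}R^{N}$ and hence \emph{every} entry at once: $R_\sigma=\pm\,(m_\Delta/m_\sigma)\,\dbar[1/z_n^{b_n}]\w\cdots\w\dbar[1/z_1^{b_1}]$, where the monomial factor \emph{lowers} the exponents $b_i$ to $\alpha_i$ rather than annihilating anything.
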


If $M$ is a complete intersection ideal, $\hull (M)$ is an $(n-1)$-simplex
and the hull resolution is the Koszul complex.
In general, $\hull (M)$ is a polyhedral subdivision of an $(n-1)$-simplex.
In fact, Theorem~\ref{main} holds for more general cellular resolutions,
where the underlying polyhedral cell complex is a polyhedral subdivision
of the $(n-1)$-simplex, see Theorem~\ref{main2}.

It was proved in \cite{CH} that if $f_1,\ldots, f_p$ is a regular sequence, then 
\begin{equation}\label{enstjarna}
R^f_{\text{CH}}\w \frac{df_1\w\cdots\w df_p}{(2\pi i)^p} = [(f)],
\end{equation} 
where $[(f)]$ is the fundamental cycle of the ideal $(f)$. 
Our main motivation to compute $R$ explicitly 
was to understand a similar factorization of the fundamental cycle of an
arbitrary ideal. By computing $d\varphi:=
d\varphi_0\circ\cdots\circ d\varphi_{n-1}$, where $\varphi_k$ are the
maps in the (hull) resolution of a (generic) Artinian monomial ideal
$\a$, and using Theorem \ref{main}, we get
\begin{equation}\label{tvastjarna}
\frac{d\varphi}{n! (2\pi i)^n}\circ R = [\a], 
\end{equation} 
see Section ~\ref{fundamental}. 
Since $\a$ is Artinian, $[\a]=m[0]$, where $m$ is the \emph{geometric
  multiplicity} $\dim_\C \Ok_0^n/\a$ of $\a$, see
\cite[Section~1.5]{F}. Moreover, since $\a$ is monomial, $m$ equals
the volume of the \emph{staircase} $\R^n_+\setminus
\bigcup_{z^\alpha\in \a} \{\alpha +
\R^n_+\}$ of $\a$. 
If $\a$ is a complete intersection ideal generated by $f_1,\ldots,
f_n$, then $d\varphi=n! df_1\w\cdots\w df_n$, and thus
\eqref{tvastjarna} can be seen as a generalization of
\eqref{enstjarna}. 
We recently managed to prove a generalized version of
\eqref{tvastjarna} for arbitrary ideals of pure dimension; this is a
current version of 
(a generalization of) a result due to Lejeune-Jalabert \cite{L-J} and will
be the subject of the forthcoming paper \cite{LW}.

In \cite{W3} the current $R$ was computed as the push-forward of a
certain 
current in a toric resolution of the ideal $M$. 
The main result in that paper asserts that each $R_\sigma$ is of the
form $R_\sigma= c_\sigma \dbar[1/z_n^{\alpha_n}]\w\cdots\w\dbar 
  [1/z_1^{\alpha_1}]$ for some $c_\sigma\in \C$. 
The coefficients $c_\sigma$ appear as integrals that
  seem to be hard to compute in general, see Section \ref{examples}. 
The proof of Theorem~\ref{main} given here is different and
more direct. A key tool is a comparison formula for residue
currents due to the first author. If 
\begin{equation*}\label{karv2}
0\to \mathcal O(E_{n-1})\stackrel{\varphi_{n-1}}{\longrightarrow} \cdots\stackrel{\varphi_1}{\longrightarrow} \mathcal O(E_0)\stackrel{\varphi_0}{\longrightarrow}\mathcal O(E_{-1})
\end{equation*}
is a resolution of an Artinian ideal $\a$ and $\ldots
\to \Ok (F_k) \to \Ok(F_{k-1}) 
\to \ldots$ is a resolution of $\b\subset \a$, then there are (locally) maps
$a_k: \Ok(F_k)\to \Ok(E_k)$, so that the diagram
\begin{equation*}\label{lediagram2} 
\xymatrix{
0 \ar[r] & \Ok (E_{n-1}) \ar[r]^{\varphi_{n-1}} & \dots \ar[r]^{\varphi_1} & \Ok(E_0) \ar[r]^{\varphi_0} & \Ok(E_{-1})\\
0 \ar[r] & \Ok(F_{n-1}) \ar[r]^{\psi_{n-1}} \ar[u]^{a_{n-1}} & \dots \ar[r]^{\psi_1}& \Ok(F_0) \ar[r]^{\psi_0} \ar[u]^{a_0} & \Ok(F_{-1}) \ar[u]^{a_{-1}}  
} 
\end{equation*} 
commutes. Theorem~1.3 in \cite{L} asserts that $R^E a_{-1}= a_{n-1} R^F$ if
$R^E$ and $R^F$ are the currents associated with $\Ok(E_\bullet)$ and
$\Ok(F_\bullet)$, respectively, see Section ~\ref{homotopy}.

The main ingredient in the proof of Theorem ~\ref{main} is Proposition
~\ref{ana}, which gives an explicit
description of mappings $a_k$ when $\Ok(E_\bullet)$ and
$\Ok(F_\bullet)$ are cellular resolutions such that the underlying
polyhedral cell complex of $\Ok(E_\bullet)$ refines the polyhedral
cell complex of $\Ok(F_\bullet)$, and which we have not managed to
find in the literature. 
Letting $\Ok(E_\bullet)$ be the hull resolution of $M$ and
$\Ok(F_\bullet)$ the Koszul complex of a sequence $z_1^{b_1}, \ldots,
z_n^{b_n}$ contained in $M$, so that $R^F$ is the simple
Coleff-Herrera product $\dbar[1/z_n^{b_n}]\w\cdots\w\dbar
[1/z_1^{b_1}]$, we can then easily compute $R^E$.

The paper is organized as follows. In Sections ~\ref{residue} and
\ref{cell} we provide some background on residue currents and cellular
resolutions, respectively. In Section ~\ref{komplex} we prove some
basic results concerning oriented polyhedral complexes, which are
needed for the proof of Theorem~\ref{main} (and the slightly more
general Theorem~\ref{main2}). The proof occupies Section
\ref{bevis}. In Section ~\ref{examples} we compare Theorems ~\ref{main}
and ~\ref{main2} to
previous results and also illustrate them by some examples. In
Section ~\ref{higher} we consider residue currents of non-Artinian monomial
ideals, and, finally, in Section ~\ref{fundamental} we discuss the relation to
fundamental cycles. 

\smallskip 
\noindent
\textbf{Acknowledgment.} 
We would like to thank Mats Andersson, Mattias Jonsson, and Mircea Musta\c{t}\u{a} for
helpful discussions. We would also like to thank the referee for valuable
comments and suggestions. The second author was supported by the Swedish Research Council.

\section{Residue currents}\label{residue}

Given a holomorphic function $f$ we will write $[1/f]$ (or sometimes just $1/f$) for the \emph{principal value distribution} of $1/f$, which can be realized, e.g., as the limit of the smooth approximands $\frac{\bar f}{|f|^2+\epsilon}$. 
If $f$ is a regular sequence of (germs of) holomorphic functions $f_1,\ldots, f_p$ one can give meaning to products of principal values $[1/f_j]$ and \emph{residue currents} $\dbar[1/f_k]$, 
as was first done in \cite{CH}, see also \cite{P2}. The products can be defined, e.g., by taking the limit of products of the
corresponding forms $\frac{\bar f_j}{|f_j|^2+\epsilon}$ and $\dbar\frac{\bar f_k}{|f_k|^2+\epsilon}$. 
They are (anti-)commutative in the factors and satisfy Leibniz' rule:
If $f_k=g_1\cdots g_s$, then 
\begin{equation}\label{leibnizregeln}
\dbar\left [\frac{1}{f_k}\right ]\w \cdots \w \dbar\left [\frac{1}{f_1}\right ]=
\sum_j \left [\frac{1}{g_1\cdots \hat g_j\cdots g_s}\right ]\dbar\left [\frac{1}{g_j}\right ]\w\dbar\left [\frac{1}{f_{k-1}}\right ]\w \cdots \w \dbar\left [\frac{1}{f_1}\right ].
\end{equation}
We will denote the \emph{Coleff-Herrera product} $\dbar [1/f_p]\w\cdots\w\dbar [1/f_1]$ of $f$ by $R_{CH}^f$. 
If $f_j=z_j^{b_j}$ for $j=1,\ldots, n$, then the action of $R_{CH}^f$
on the test form $\xi(z)dz_1\w\cdots\w dz_n$ equals 
\[
\frac{(2\pi i)^n}{(b_1-1)!\cdots (b_n-1)!}
\frac{\partial^{b_1+\cdots +b_n-n}}
{\partial z_1^{b_1-1} \cdots \partial z_n^{b_n-1}}
\xi (0).
\]

Consider a complex of Hermitian holomorphic vector bundles over a
complex manifold ~$X$ of dimension $n$, 
\begin{equation}\label{bunt}
0\to E_N\stackrel{\varphi_N}{\longrightarrow}\ldots\stackrel{\varphi_2}{\longrightarrow} E_1\stackrel{\varphi_1}{\longrightarrow}
E_0\stackrel{\varphi_0}{\longrightarrow}E_{-1},
\end{equation}
that is exact outside an analytic variety ~$Z\subset X$ of positive
codimension $p$. Suppose that the rank of ~$E_{-1}$ is $1$. 
In \cite{AW} Andersson and the second author constructed an $\End
(\bigoplus E_k)$-valued current $R=R^E$ that in a certain sense measures the
lack of exactness of the associated sheaf complex of holomorphic sections 
\begin{equation}\label{karv}
0\to \mathcal O(E_N)\stackrel{\varphi_N}{\longrightarrow} \cdots\stackrel{\varphi_1}{\longrightarrow} \mathcal O(E_0)\stackrel{\varphi_0}{\longrightarrow}\mathcal O(E_{-1}).
\end{equation}
The current $R$ has support on $Z$ and if $\xi\in\Ok(E_{-1})$ satisfies
$R\xi=0$ then $\xi\in \Im \varphi_0$. If \eqref{karv} is exact,
i.e., if it is a locally free resolution of the sheaf $\Ok(E_{-1})/\Im
\varphi_0$, then $R\xi =0$ if and only if $\xi\in\Im \varphi_0$. 
The grading in \eqref{bunt} is somewhat unorthodox; in \cite{AW} the
complex ends at $E_0$. In this paper the grading is shifted by one
step, in order to make it fit the grading of the hull complex better.

Let ~$R^\ell_k$ denote the component of ~$R$ that takes values in
$\Hom(E_{\ell-1}, E_{k-1})$ and let $R^\ell=\sum_k R^\ell_k$. 
The shifting of the indices here is motivated by the shifting of the grading
of \eqref{bunt} compared to \cite{AW}. 
If \eqref{karv}
is exact, then $R^\ell=0$ for $\ell\geq 1$. We then write $R_k=R_k^{0}$
without any risk of confusion. The current $R_k$ has bidegree $(0,k)$, and
thus, by the dimension
principle for residue currents (see \cite{AW2}, Corollary~2.4), $R_k=0$ for $k<p$, and 
for degree reasons, $R_k = 0$ for $k>n$. 
In particular, if \eqref{karv} is a resolution of length $p$ of a
\emph{Cohen-Macaulay} ideal sheaf, i.e., at each $x\in X$, there is a
resolution of length $p$ (so that \eqref{karv} ends at level $p-1$), then $R=R_p$.  
In this case, $R$ is independent of the Hermitian metrics on the bundles $E_k$. 
By Hilbert's syzygy theorem, each $0$-dimensional ideal sheaf is
Cohen-Macaulay.

The degree of explicitness of the current $R$ of course 
depends on the degree of explicitness of the complex 
\eqref{bunt}. In general it is hard to find explicit free resolutions. 
In Section ~\ref{cell} we will describe a method for constructing free
resolutions of monomial ideals due to Bayer-Sturmfels ~\cite{BaS}.

\begin{ex}\label{koszulex}
Let $f$ be a sequence of holomorphic functions $f_1,\ldots,
f_p$ in a domain $\Omega$ in $\C^n$, and let \eqref{bunt} be the Koszul complex of $f$: Identify
$f$ with a section $f=\sum f_j e_j$ of a trivial vector bundle $\widetilde E$ of
rank $p$ over $\Omega$ 
with frame $e_j$. 
Let $E_{k-1}$ be the $k$th exterior product $\Lambda^k \widetilde E^*$ of the dual bundle
$\widetilde E^*$, equipped with the trivial metric, 
and let $\varphi_{k-1}$ be
contraction $\delta_f$ with $f$, i.e., 
\begin{equation*}
\delta_f: e_{i_1}^*\w\cdots \w e_{i_k}^* \mapsto 
\sum_j (-1)^{j-1} f_{i_j}   e_{i_1}^*\w\cdots \wedge e_{i_{j-1}}^*  \w e_{i_{j+1}}^*\w\cdots \w e_{i_k}^*,
\end{equation*}
where $e_j^*$ is the dual frame to $e_j$.
Then the entries of $R^E$
are the \emph{Bochner-Martinelli residue currents of $f$} in the sense of
Passare-Tsikh-Yger \cite{PTY},
see \cite{A}. 
If $f$ defines a complete intersection ideal $\a$, 
then the Koszul complex of
$f$ is a resolution of 
$\a$ and 
the current $R^E=R^E_{p}$ then equals the Coleff-Herrera product 
$R_{CH}^f$ (times $e_1^*\w\cdots\w e_p^*$), see
\cite[Theorem~4.1]{PTY} or \cite[Theorem~1.7]{A}. 
The currents $R^E$ can thus be seen as generalizations of the
Coleff-Herrera products and the fact that $R^E\xi=0$ if and only if $\xi\in\Im
\varphi_0$ when \eqref{karv} is exact can be seen as an extension of the duality principle
for Coleff-Herrera products.

\end{ex}

\subsection{A comparison formula for residue currents}\label{homotopy} 

Assume that $E_\bullet, \varphi_\bullet$ and $F_\bullet, \psi_\bullet$
are Hermitian complexes of vector bundles and that there are
holomorphic mappings $a_k:\Ok(F_k)\to \Ok(E_k)$ so that the diagram 
\begin{equation}\label{lediagram} 
\xymatrix{
0 \ar[r] & \Ok (E_N) \ar[r]^{\varphi_N} & \dots \ar[r]^{\varphi_1} & \Ok(E_0) \ar[r]^{\varphi_0} & \Ok(E_{-1})\\
0 \ar[r] & \Ok(F_N) \ar[r]^{\psi_N} \ar[u]^{a_N} & \dots \ar[r]^{\psi_1}& \Ok(F_0) \ar[r]^{\psi_0} \ar[u]^{a_0} & \Ok(F_{-1}) \ar[u]^{a_{-1}}  
} 
\end{equation}
commutes. 
For example, if the sheaf complex \eqref{karv} is exact and $\Im \psi_0\subset \Im \varphi_0$ one
can always find maps $a_k:  \Ok_x(F_k)\to \Ok_x(E_k)$ for each $x\in
X$, so that the corresponding diagram commutes, see
\cite[Proposition~A3.13]{E}. 

In \cite{L} the residue currents associated
with $E_\bullet, \varphi_\bullet$ and $F_\bullet, \psi_\bullet$ are related in terms of the morphisms
$a_k$. Assume that $\Ok(E_\bullet), \varphi_\bullet$ and $\Ok(F_\bullet),
\psi_\bullet$ are locally free resolutions of minimal length of $\Ok(E_{-1})/\a$ and
$\Ok(F_{-1})/\b$, respectively, where $\a$ and $\b$ 
are Cohen-Macaulay ideals of codimension $p$. 
Then Theorem~1.3 in \cite{L}
asserts that 
\begin{equation}\label{tricket}
R^E a_{-1}=a_{p-1} R^F.
\end{equation}
We will apply \eqref{tricket} to the situation where $\a$ and $\b$ are
ideals of $\Ok(E_{-1})=\Ok(F_{-1})$ such that $\b\subset\a$
(and $a_{-1}$ is the isomorphism $\Ok(F_{-1})\cong\Ok(E_{-1})$).

If $E_\bullet,\varphi_\bullet$ and $F_\bullet,\psi_\bullet$ are Koszul complexes of regular
sequences $f_1,\ldots, f_p$ and $g_1,\ldots, g_p$, respectively, such that
$[g_p \ldots g_1]^T= A [f_p \ldots f_1]^T$ for some holomorphic matrix $A$, then \eqref{tricket} is just
the \emph{transformation law} for Coleff-Herrera products:
\begin{equation} \label{translaw}
    R^f_{CH}= \det(A) R^g_{CH},
\end{equation}
see \cite[Remark~2]{L}.

\section{Oriented polyhedral cell complexes}\label{komplex} 

Recall that a \emph{face} of a polytope $\sigma$ is the intersection
of $\sigma$ and a supporting hyperplane of $\sigma$.  
A \emph{polyhedral cell complex} ~$X$ is a finite collection of convex
polytopes in $\R^n$ for some $n$, the \emph{faces} of ~$X$, that satisfy that if
$\sigma\in X$ and ~$\tau$ is a face of ~$\sigma$, 
then $\tau\in X$,
and moreover if ~$\sigma$ and ~$\sigma'$ are in ~$X$, then $\sigma\cap
\sigma'$ is a face of both ~$\sigma$ and ~$\sigma'$. 
For a reference on polytopes and polyhedral cell complexes, see, e.g., \cite{Z}.
The dimension of
a face ~$\sigma$, $\dim \sigma$, is defined as the dimension of its
affine hull (in $\R^n$) and the dimension of ~$X$, $\dim X$, is
defined as $\max_{\sigma\in X} \dim \sigma$. 
Let ~$X_k$ denote the set
of faces of ~$X$ of dimension $k$; $X_{-1}$ should be interpreted as
$\{\emptyset\}$. 
If $\dim \sigma=k$, then a face of $\sigma$ of dimension $k-1$ is said to be a
\emph{facet} of $\sigma$. 
Faces of 
dimension ~$0$ are called \emph{vertices} and faces of dimension ~$1$
are called \emph{edges}.  A face ~$\sigma$ is a \emph{simplex} if the number of vertices
is equal to $\dim \sigma+1$. A polyhedral cell
complex $X'\subset X$ is said to be a \emph{subcomplex} of
$X$. 

We will write $|X|$ for the union of all faces in $X$. 
A \emph{polyhedral subdivision} of a
polytope $\sigma\subset\R^n$ is a polyhedral cell complex $X$, such that
$|X|=\sigma$. 
If $Y$ is a polyhedral cell complex such that $|X| = |Y|$ and  
each face in $Y$
is a union of faces in $X$; we say that $X$ \emph{refines} $Y$.


The following lemma can be proved by standard arguments, cf., e.g.,
\cite{Z}. Note that the assumption that $|X|$ is
convex is crucial. For example, the lemma fails to hold if $X$
consists of three edges meeting at a single vertex. 

\begin{lma}\label{fallen}
Let $X$ be a polyhedral cell complex of
dimension $k\geq 1$, such that $|X|$ is a convex polytope. Consider
$\tau\in X_{k-1}$. If $\tau$ is
contained in the boundary of $|X|$, there is a unique $\sigma\in
X_k$ such that $\tau$ is a facet of $\sigma$. Otherwise there are
precisely two faces $\sigma_1,\sigma_2\in X_k$ such that $\tau$ is a
facet of $\sigma_1$ and $\sigma_2$. 
\end{lma}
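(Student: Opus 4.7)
The plan is to analyze the local structure of $X$ near a relative interior point of $\tau$ and relate the number of $k$-faces with $\tau$ as a facet to whether $\tau$ lies on the boundary of $|X|$. First I will pass to the affine hull $A$ of $|X|$, which is a $k$-dimensional affine subspace of $\R^n$ (the dimension is exactly $k$ since $X$ has a face of dimension $k$ but all of $|X|$ is a union of such faces). This reduces the setting to that of a full-dimensional convex polytope in $\R^k$.

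Next I would fix $p$ in the relative interior of $\tau$ and let $H \subset A$ be the affine hull of $\tau$, which is a hyperplane in $A$. For $\epsilon > 0$ small enough, the ball $B = B(p,\epsilon) \subset A$ meets only those faces of $X$ which contain $\tau$, namely $\tau$ itself and the $k$-faces of $X$ having $\tau$ as a facet. For each such $k$-face $\sigma$, I will use the definition of facet: $\tau$ arises as the intersection of $\sigma$ with a supporting hyperplane, which must equal $H$; together with convexity of $\sigma$ this forces $\sigma \cap B$ to be a closed half-ball of $B$ bounded by $H$.

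I then count the $k$-faces $\sigma$ having $\tau$ as a facet. If two such faces lay on the same side of $H$, their intersection would have nonempty interior in $B$, contradicting the fact that in a polyhedral cell complex the intersection of two distinct faces is a common face of strictly smaller dimension. Hence at most one $k$-face lies on each side of $H$, giving $m \le 2$. On the other hand, $|X|$ has dimension $k$ near $p$ while $\tau$ has dimension $k-1$, so at least one $k$-face must contain $\tau$, giving $m \ge 1$. Thus $m \in \{1,2\}$.

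Finally I will match the two cases with the dichotomy of the lemma. If $m=2$, both sides of $H$ in $B$ are covered by $|X|$, hence $B \subseteq |X|$ and $p$ lies in the interior of $|X|$; by the same argument at every relative interior point of $\tau$, the relative interior of $\tau$ lies in the interior of $|X|$, so $\tau \not\subset \partial|X|$. If $m=1$, then $B$ meets $|X|$ on only one side of $H$, so $p \in \partial|X|$; since $\partial|X|$ is closed and contains the relative interior of $\tau$, we obtain $\tau \subset \partial|X|$. The main technical point is the step identifying $\sigma \cap B$ with a half-ball bounded by $H$, which is where the convexity of $|X|$ (via the supporting-hyperplane characterization of facets) enters; this is also exactly the step that fails for the tripod counterexample following the statement.
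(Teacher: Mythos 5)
The paper does not actually prove this lemma: it is stated with the remark that it ``can be proved by standard arguments, cf.\ \cite{Z}'', so there is no proof of record to compare against, and your argument should be judged on its own. It is correct, and it is the standard local argument. Two small points are worth tightening. First, the opening reduction $\dim(\operatorname{aff}|X|)=k$ deserves a full sentence: a convex polytope of dimension $>k$ has nonempty interior in its affine hull and so cannot be covered by finitely many polytopes of dimension $\le k$. This is already where convexity of $|X|$ enters decisively, and it is here --- rather than at the half-ball step --- that the tripod example really breaks down: its three edges span a plane, so $\operatorname{aff}(\tau)=\{p\}$ is not a hyperplane of $\operatorname{aff}|X|$, there are no ``two sides of $H$'', and the bound $m\le 2$ is lost (each edge still meets a small ball in a half-ball of \emph{its own} line, so the half-ball structure per se survives). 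Second, the half-ball identification you flag as the main technical point is indeed the one assertion needing a computation: write $\sigma=\{x\in A: f_i(x)\le c_i,\ i=1,\dots,N\}$ with an irredundant system whose facets correspond to the inequalities; since $p$ lies in the relative interior of the facet $\tau$, it lies on no other facet, so $f_i(p)<c_i$ for all $i$ except the one defining $\tau$, whence $\sigma\cap B(p,\epsilon)=B(p,\epsilon)\cap\{f_1\le c_1\}$ for small $\epsilon$. With that line added (and noting that ``boundary'' must be read relative to $\operatorname{aff}|X|$, as you implicitly do), the proof is complete.
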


\subsection{Orientation}\label{oriental}
For a convex set $S\subset \R^n$ we let $\spann S$ be the underlying
vector space of the affine hull of $S$. 
In other words, $\spann S$ is the subspace of $\R^n$ generated by
vectors of the form $\rho_1-\rho_2$, where $\rho_1,\rho_2\in S$. By an \emph{oriented polytope} in $\R^n$ we will mean a polytope
$\sigma\subset \R^n$ together with an orientation of the subspace
$\spann\sigma$. 
Within this section will write $\underline\sigma$ for the polytope and reserve $\sigma$
for the oriented polytope. 
Recall that an orientation of $\spann \underline\sigma$
is determined by a linear form, which we denote by $\omega_\sigma$, on
$\Lambda^k (\spann\underline\sigma)$ if $\dim \underline\sigma=k\geq 1$;
a basis $w_1,\ldots, w_k$ of $\spann\underline\sigma$ is positively
oriented if and only if $\omega_\sigma(w_1\w\cdots\w w_k)>0$. 
There is only one way of orienting polytopes of dimension
$0$ as well as the empty set.

\begin{remark}
An oriented simplex can equivalently be seen as a simplex together with  an
equivalence class of the total ordering of the vertices, where two
orderings are equivalent if and only if they differ by an even
permutation. 
We write $[v_1,\ldots , v_{k+1}]$ for the simplex with
vertices $v_1, \ldots, v_{k+1}$ together with the equivalence class of the ordering
$v_1<\ldots <v_{k+1}$, and
$-[v_1,\ldots , v_{k+1}]$ for the simplex with the opposite orientation,
cf.\ for instance, \cite[Chap.~4]{Sp}. If $\underline\sigma$ is a simplex with vertices
$v_1,\ldots, v_{k+1}$, we identify $\sigma=[v_1,\ldots, v_{k+1}]$ with $\sigma$ oriented
so that the basis $v_1-v_{k+1},\cdots,v_k-v_{k+1}$ of $\spann \underline\sigma$ is positively oriented. 
\end{remark}

An oriented polytope $\sigma$ of dimension $k\geq 2$ induces orientations of the 
facets of $\sigma$ in the following way: 
Let $\underline\tau$ be a facet of $\underline\sigma$, and let $\eta$ be a normal vector
to the affine hull of $\underline\tau$ in the affine hull of $\underline\sigma$
pointing in the direction of $\underline\sigma$. We will say that such a vector $\eta$
is a \emph{normal vector to $\underline\tau$ pointing inwards to $\underline\sigma$}.
Then, the orientation of $\spann\underline\tau$ induced by $\sigma$ is
defined by that a basis $w_1,\ldots,w_{k-1}$ of $\spann \underline\tau$ is positively oriented
if and only if $\eta, w_1,\ldots,w_{k-1}$ is a positively oriented basis of $\spann \underline\sigma$.
If $\sigma$ is a simplex $[v_1,\ldots, v_{k+1}]$ and $\underline\tau$ is obtained from $\underline\sigma$ by removing the vertex $v_j$, then it is easily verified that $\sigma$ induces the orientation $(-1)^{j-1}[v_1,\ldots, v_{j-1},v_{j+1},\ldots, v_{k+1}]$ of $\tau$.

We say that a polyhedral cell complex is
\emph{oriented} if each face is equipped with an orientation. More
precisely, an oriented polyhedral cell complex is a finite collection of
oriented polytopes $\sigma$, such that the underlying polytopes
$\underline\sigma$ form a polyhedral cell complex; we say that $\tau$
is a face of $\sigma$ if $\underline\tau$ is a face of
$\underline\sigma$ etc.

If $X$ is an oriented polyhedral cell complex, $\sigma\in X_k$, and $\tau\in
X_{k-1}$ is a facet of $\sigma$, let $\sign(\tau,\sigma)=1$ if the
orientation of $\tau$ induced by the orientation of $\sigma$ coincides
with the orientation of $\tau$, and let $\sign(\tau,\sigma)=-1$
otherwise. 
If $w_1,\ldots,w_{k-1}$ is a basis of $\spann \underline\tau$, and $\eta$ is a
normal vector of $\underline\tau$ pointing inwards to $\underline\sigma$, then
\begin{equation} \label{eqsign1} 
    \sign(\tau,\sigma) = \sgn\big(\omega_\sigma(\eta\w w_1\w\cdots\w
    w_{k-1})\big)/\sgn\big(\omega_\tau(w_1\w \cdots\w w_{k-1})\big).
\end{equation}
If $k = 1$, 
we interpret $\sign(\tau,\sigma)$ as $1$ if the normal $\eta$ pointing
inwards to $\underline\sigma$ is positively oriented, and $-1$ otherwise, and if $k=0$ we interpret $\sign(\tau,\sigma)$ as $1$.  
This is consistent with \eqref{eqsign1} if we interpret $\omega_\sigma$
as $1$ if $\dim\sigma\leq 0$.

Similarly if $\sigma\in X_k$ and $\sigma'$ is any oriented polytope of dimension $k$ that is
contained in $\sigma$ (i.e., $\underline\sigma'\subset\underline\sigma$), let $\sign(\sigma',\sigma)=1$ if the
orientation of $\spann \underline \sigma'=\spann \underline \sigma$
given by $\sigma'$ coincides with the
orientation given by $\sigma$ and let $\sign(\sigma',\sigma)=-1$
otherwise.
If $w_1,\ldots,w_k$ is a basis of $\spann \underline\sigma$, then
\begin{equation} \label{eqsign2}
    \sign(\sigma',\sigma) = \sgn\big(\omega_\sigma(w_1\w \cdots\w
    w_k)\big)/\sgn\big(\omega_{\sigma'}(w_1\w \cdots\w w_k)\big).
\end{equation}
If $k\leq 0$, $\sign(\sigma',\sigma)$ should be interpreted as $1$.

\begin{lma}\label{forfining}
Let $X$ and $X'$ be oriented polyhedral cell complexes such that $X'$
refines $X$. Assume that $\sigma'\subset \sigma$, where $\sigma\in X_k$ and $\sigma'\in X'_k$. Moreover assume that $\tau\in X_{k-1}$ and
$\tau'\in X'_{k-1}$ are facets of $\sigma$ and $\sigma'$,
respectively, and that $\tau'\subset\tau$. Then 
\begin{equation}\label{storm}
\sign(\sigma',\sigma)\sign(\tau',\sigma')=
\sign(\tau,\sigma)\sign(\tau',\tau).
\end{equation} 
\end{lma}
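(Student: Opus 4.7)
The plan is to express all four signs via the formulas \eqref{eqsign1} and \eqref{eqsign2} relative to a single chosen frame, so that both sides of \eqref{storm} telescope to the same expression.

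First I would note the identifications $\spann\underline{\sigma'}=\spann\underline\sigma$ and $\spann\underline{\tau'}=\spann\underline\tau$, which are immediate from the fact that the primed faces are contained in the unprimed ones and have the same dimension. The key geometric step is then to verify that a vector $\eta$ normal to $\underline\tau$ and pointing inwards to $\underline\sigma$ is \emph{also} normal to $\underline{\tau'}$ and points inwards to $\underline{\sigma'}$. Normality is immediate from the span identifications. For the inward-pointing claim, $\sigma'$ lies on a single side of the hyperplane $\spann\underline{\tau'}\subset\spann\underline\sigma$ because $\tau'$ is a facet of $\sigma'$, and since $\sigma'\subset\sigma$ this side must coincide with the side into which $\sigma$ extends across $\spann\underline\tau$. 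I expect this to be the main (in fact the only) obstacle; once it is settled, the remainder is bookkeeping.

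With this $\eta$ fixed, I would choose any basis $w_1,\ldots,w_{k-1}$ of $\spann\underline\tau=\spann\underline{\tau'}$, so that $\eta,w_1,\ldots,w_{k-1}$ is a basis of $\spann\underline\sigma=\spann\underline{\sigma'}$. Writing $W=w_1\wedge\cdots\wedge w_{k-1}$ and applying \eqref{eqsign1} and \eqref{eqsign2} to each of the four signs yields
\begin{align*}
\sign(\sigma',\sigma)\,\sign(\tau',\sigma') &= \frac{\sgn\omega_\sigma(\eta\wedge W)}{\sgn\omega_{\sigma'}(\eta\wedge W)}\cdot\frac{\sgn\omega_{\sigma'}(\eta\wedge W)}{\sgn\omega_{\tau'}(W)}, \\
\sign(\tau,\sigma)\,\sign(\tau',\tau) &= \frac{\sgn\omega_\sigma(\eta\wedge W)}{\sgn\omega_\tau(W)}\cdot\frac{\sgn\omega_\tau(W)}{\sgn\omega_{\tau'}(W)},
\end{align*}
and both products collapse to the common value $\sgn\omega_\sigma(\eta\wedge W)/\sgn\omega_{\tau'}(W)$, proving \eqref{storm}. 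The low-dimensional cases $k\le 1$ are absorbed into the same telescoping by the paper's convention that $\omega_\sigma=1$ when $\dim\sigma\le 0$ (with $W$ regarded as empty when $k=1$), so no separate case analysis is required.
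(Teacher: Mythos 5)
Your proposal is correct and is essentially the paper's own argument: fix one normal vector $\eta$ that serves simultaneously for the pair $(\tau',\sigma')$ and the pair $(\tau,\sigma)$, pick a common basis of $\spann\underline{\tau'}=\spann\underline\tau$, and observe via \eqref{eqsign1} and \eqref{eqsign2} that both sides of \eqref{storm} reduce to $\sgn\big(\omega_\sigma(\eta\w w_1\w\cdots\w w_{k-1})\big)/\sgn\big(\omega_{\tau'}(w_1\w\cdots\w w_{k-1})\big)$. You correctly identify and justify the one genuinely geometric point (that the inward normal is shared), which the paper simply asserts, so no gaps remain.
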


\begin{proof}
    Let $\eta$ be a normal vector of $\underline\tau'$ pointing inwards to $\underline\sigma'$.
    Then, $\eta$ is also a normal vector of $\underline\tau$ pointing inwards to $\underline\sigma$.
    Let $w_1,\ldots,w_{k-1}$ be a basis of $\spann \underline\tau' = \spann \underline\tau$. Then by \eqref{eqsign1} and \eqref{eqsign2},
    both 
    sides of \eqref{storm} are equal to
    \begin{equation*}
        \sgn\big (\omega_\sigma(\eta \w w_1 \w \cdots \w
        w_{k-1})\big)/\sgn\big(\omega_{\tau'}(w_1 \w\cdots\w w_{k-1})\big).
    \end{equation*}
\end{proof}

\begin{figure}
\begin{center}
\includegraphics{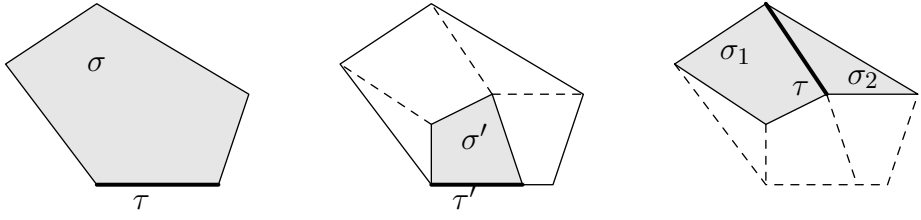}
\caption{Examples of faces $\sigma$, $\tau$, $\sigma'$, and $\tau'$ in
  Lemma ~\ref{forfining} (in the left and middle figure) and faces
  $\sigma_1$, $\sigma_2$ and $\tau$ in Lemma ~\ref{cancel} (in the
  right figure).}
\label{lemmorna}
\end{center}
\end{figure}

\begin{lma}\label{cancel}
Let $\sigma$ be an oriented polytope of dimension $k \geq 1$, and let $X$ be a polyhedral
subdivision of $\sigma$.
Assume that $\tau\in X_{k-1}$ is a facet of two faces $\sigma_1, \sigma_2\in X_k$. 
Then 
\begin{equation}\label{streckkod}
\sign(\sigma_1, \sigma)\sign(\tau, \sigma_1)+
\sign(\sigma_2, \sigma)\sign(\tau, \sigma_2)=0.
\end{equation} 
\end{lma}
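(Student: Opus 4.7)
The strategy is to exploit the fact that $\sigma_1$ and $\sigma_2$, being $k$-dimensional polytopes in a polyhedral subdivision of the $k$-dimensional polytope $\sigma$, must lie on opposite sides of the affine hyperplane spanned by their common facet $\tau$. Once this is established, the identity follows by choosing adapted bases in the formulas \eqref{eqsign1} and \eqref{eqsign2}.

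In more detail, the plan is as follows. First, since $\dim\sigma_1 = \dim\sigma_2 = \dim\sigma = k$, all three polytopes share the same affine hull, so $\spann\underline{\sigma_1} = \spann\underline{\sigma_2} = \spann\underline{\sigma}$. Let $\eta_1$ and $\eta_2$ be normal vectors to $\underline\tau$ in $\spann\underline{\sigma}$ pointing inwards to $\underline{\sigma_1}$ and $\underline{\sigma_2}$, respectively. I claim that $\eta_2 = -c\eta_1$ for some $c > 0$. Indeed, if $\eta_1$ and $\eta_2$ pointed into the same open half-space of $\spann\underline{\sigma}\setminus\spann\underline\tau$, then small neighborhoods of an interior point of $\underline\tau$ in $\underline{\sigma_1}$ and $\underline{\sigma_2}$ would both contain an open $k$-dimensional set on the same side of $\spann\underline\tau$; this set would have to lie in $\underline{\sigma_1}\cap\underline{\sigma_2}$, which is a face of both of dimension at most $k-1$, a contradiction.

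Next, I pick a basis $v_1,\ldots,v_{k-1}$ of $\spann\underline\tau$ and extend it by $\eta_1$ to a basis $\eta_1,v_1,\ldots,v_{k-1}$ of $\spann\underline\sigma$. Applying \eqref{eqsign2} to compare $\sigma_1$ with $\sigma$ (using this basis) and \eqref{eqsign1} to compare $\tau$ with $\sigma_1$ (using the basis $v_1,\ldots,v_{k-1}$ together with the normal $\eta_1$), the factor $\sgn(\omega_{\sigma_1}(\eta_1\w v_1\w\cdots\w v_{k-1}))$ cancels, yielding
\begin{equation*}
\sign(\sigma_1,\sigma)\sign(\tau,\sigma_1)
= \frac{\sgn\bigl(\omega_\sigma(\eta_1\w v_1\w\cdots\w v_{k-1})\bigr)}{\sgn\bigl(\omega_\tau(v_1\w\cdots\w v_{k-1})\bigr)}.
\end{equation*}
The exact same computation with $\eta_2$ in place of $\eta_1$ gives the analogous formula for $\sign(\sigma_2,\sigma)\sign(\tau,\sigma_2)$.

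Finally, since $\eta_2 = -c\eta_1$ with $c>0$, multilinearity of $\omega_\sigma$ gives
\begin{equation*}
\sgn\bigl(\omega_\sigma(\eta_2\w v_1\w\cdots\w v_{k-1})\bigr) = -\sgn\bigl(\omega_\sigma(\eta_1\w v_1\w\cdots\w v_{k-1})\bigr),
\end{equation*}
so the two terms in \eqref{streckkod} are negatives of each other, completing the proof. The only non-routine step is the opposite-sides claim, but this is a direct consequence of the polyhedral cell complex axiom that $\underline{\sigma_1}\cap\underline{\sigma_2}$ is a face of both $\underline{\sigma_1}$ and $\underline{\sigma_2}$; the rest is bookkeeping with the sign definitions.
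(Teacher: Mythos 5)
Your proof is correct and follows essentially the same route as the paper's: both arguments reduce the identity to the fact that the inward-pointing normals to $\underline\tau$ relative to $\underline{\sigma_1}$ and $\underline{\sigma_2}$ are opposite, and then cancel via \eqref{eqsign1} and \eqref{eqsign2} with an adapted basis. The only cosmetic difference is that the paper establishes the opposite-normals fact by normalizing coordinates so that $\underline\tau\subset\{x_k=0\}$ and $\underline\sigma_j\subset\{\pm x_k\geq 0\}$, whereas you argue it directly from the face-intersection axiom; both are fine.
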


\begin{proof}
    Being in the same situation as in the second case in Lemma~\ref{fallen},
    it is easily verified that we may assume that $|X|=\underline\sigma\subset \R^k_{x_1,\ldots, x_k}$,
    $\underline\tau\subset \{x_k=0\}$, and $\underline\sigma_j\subset H_j, j=1,2$,
    where $H_1 = \{ x_k \geq 0 \}$ and $H_2 = \{ x_k \leq 0 \}$.
Then the vector $\eta:=(0,\ldots, 0,1)$ is a normal vector to $\underline\tau$ pointing inwards to $\underline\sigma_1$ and $-\eta$ is a  normal vector to $\underline\tau$ pointing inwards to $\underline\sigma_2$. Letting $w_1,\ldots,w_{k-1}$ be a basis of $\spann \underline\tau$, 
   by \eqref{eqsign1} and \eqref{eqsign2}  the first term in the left-hand side of \eqref{streckkod} equals
    \begin{equation} \label{eqsgnsigmatau}
        \sgn\big (\omega_\sigma(\eta\w w_1 \w \cdots \w w_{k-1})\big
        )/\sgn\big (\omega_\tau(w_1 \w\cdots \w w_{k-1})\big )
    \end{equation}
    and the second term equals \eqref{eqsgnsigmatau} with the opposite sign. 

\end{proof}

\section{Cellular resolutions of monomial ideals}\label{cell}

Let us recall the construction of cellular resolutions due to
Bayer-Sturmfels \cite{BaS}. 
Let ~$S$ be the polynomial ring $\C[z_1,\ldots,z_n]$. 
We say that an (oriented) polyhedral cell complex ~$X$ is \emph{labeled} if there
is a monomial 
$m_i$ in ~$S$ associated with each vertex $v_i$. An arbitrary face
~$\sigma$ of ~$X$ is then labeled by the least common multiple of the
labels of the vertices of ~$\sigma$, i.e., by 
$m_\sigma=\text{lcm}\{m_{i}|i\in \sigma\}$; $m_\emptyset$ should be
interpreted as $1$. 
We will sometimes be sloppy and not differ between the faces of a labeled complex and their labels.

\begin{df}
If $X$ and $Y$ are two labeled polyhedral cell complexes, we say that $X$ \emph{refines}
$Y$ if $X$ refines $Y$ as polyhedral cell complexes, i.e., $|X| = |Y|$, and each
face of $Y$ is a union of faces in $X$, and in addition, we require that
if $\sigma' \in X$, $\sigma \in Y$, and $\sigma' \subset \sigma$, then
$m_{\sigma'} | m_{\sigma}$.
Note that this implies that the ideal generated by the labels of the vertices of $Y$
must be contained in the ideal generated by the labels of the vertices of $X$.
\end{df}

Let ~$M$ be a \emph{monomial ideal} in ~$S$, i.e., $M$ can be generated by
monomials. We will use the shorthand notation $z^\alpha$ for the monomial
$z_1^{\alpha_1}\cdots z_n^{\alpha_n}$ in $S$. 
It is easy to check that a monomial ideal has a
unique minimal set of generators that are monomials; assume that $\{m_1,
\ldots, m_r\}$ is a minimal set of monomial generators of $M$. 
Next, let ~$X$ be an oriented polyhedral cell complex with vertices $\{1,\ldots,r\}$
labeled by ~$\{m_1,\ldots, m_r\}$.
We will associate with ~$X$ a graded complex of free $S$-modules: For
$k=-1,\ldots, \dim X$, let ~$A_k$ be the free $S$-module with basis $\{e_\sigma\}_{\sigma\in \xk}$ 
and let the differential $\varphi_k:A_k \to A_{k-1} $ be defined by 
\begin{equation}\label{sabel}
\varphi_k: e_\sigma  \mapsto  
\sum_{\text{facets }\tau\subset \sigma} \sign(\tau,\sigma)~\frac{m_\sigma}{m_{\tau}}~ e_{\tau}.
\end{equation}
Note that $m_\sigma/m_{\tau}$ is a monomial when $\tau$ is a face of $\sigma$. The complex
\[
\F_X:0\to A_{\dim X}\stackrel{\varphi_{\dim
    X}}{\longrightarrow} \cdots \stackrel{\varphi_1}{\longrightarrow}
A_0\stackrel{\varphi_0}{\longrightarrow} A_{-1}
\]
is the \emph{cellular complex} \emph{supported on} ~$X$. 
Note that, with the identification $A_{-1}=S$, the cokernel of $\varphi_0$ equals $S/M$. 
The complex $\F_X$ 
is exact if the labeled complex ~$X$ satisfies a
certain acyclicity condition. More precisely, for $\beta\in \N^n$,
where $\N=\{0,1,\ldots\}$, let $X_{\preceq \beta}$ denote the subcomplex of ~$X$ consisting
of all faces ~$\sigma$ for which $z^\beta$ is divisible by
$m_\sigma$. 
Then ~$\F_X$ is
exact if and only if ~$X_{\preceq \beta}$ is acyclic, which means that
it is empty or has zero reduced homology, for all $\beta\in\N^n$, see
\cite[Proposition~4.5]{MS}. Note, in particular, that
the acyclicity does not depend on the orientation of $X$. 
When $\F_X$ is exact we say that it is a \emph{cellular resolution} of ~$S/M$.

To put the cellular resolutions into the context of ~\cite{AW},
let us consider the vector bundle complex ~\eqref{bunt},
where ~$E_k$ for $k=-1,\ldots, N=\dim X$ is a trivial bundle over
~$\C^n$ of rank equal to the number of faces in $X_k$,  
with a global frame $\{e_\sigma\}_{\sigma\in \xk}$,
endowed with the trivial metric, and where the
differential $\varphi_k$ is given by ~\eqref{sabel}. 
We will say that the corresponding residue current ~$R$ is associated
with ~$X$ and denote it by $R^X$, and we will use ~$R_\sigma$ to denote
the coefficient of $e_\sigma\otimes e_\emptyset^*$. 
The induced sheaf complex ~\eqref{karv} is exact if and only if ~$\F_X$ is. 
This follows from the standard fact that the ring $\Ok_0$ of germs of
holomorphic functions at $0\in\C^n$ is flat over $S$,
see for example \cite[Theorem~13.3.5]{Ta}.
We will think of monomial ideals sometimes as ideals in the polynomial
ring $S$, sometimes as ideals in the ring of entire functions in
$\C^n$, and sometimes as ideals in the local ring $\Ok_0^n$.

\subsection{The hull resolution}\label{hull}

Given a monomial ideal $M$ in $S$ and $t\in\R$, 
let $\mathcal P_t=\mathcal P_t(M)$ 
be the convex hull in $\R^n$ of 
$\{(t^{\alpha_1},\ldots, t^{\alpha_n})=:t^\alpha \mid z^{\alpha}\in M\}$. 
Then $\mathcal P_t$ is a unbounded 
polyhedron in $\R^n$ of dimension $n$ and the face poset (i.e., the set of faces
partially ordered by inclusion) of bounded faces of $\mathcal P_t$ is independent of $t$ if
$t\gg 0$. 
The \emph{hull complex} $\hull(M)$ of $M$, introduced in \cite{BaS}, is the polyhedral cell
complex of all bounded faces of $\mathcal P_t$ for $t\gg 0$. The vertices of
$\hull (M)$ are precisely the points $t^\alpha$, where $z^\alpha$ is a
minimal generator of $M$, and thus $\hull(M)$ admits a natural labeling. 
The corresponding complex $\F_{\hull (M)}$ is a resolution of
$S/M$; it is called the \emph{hull resolution}. 

\begin{ex}\label{sporty} 
Let $N$ be the complete intersection ideal $(z_1^{b_1}, \ldots, z_n^{b_n})$.
Then, $\hull(N)$ is the polyhedral cell complex consisting of the $(n-1)$-simplex
$\Delta = [v_1,\ldots,v_n]$ in $\R^n$ and its faces, where
$v_1=(t^{b_1},1,\ldots, 1), v_2=(1,t^{b_2},1,\ldots, 1), \ldots, v_n=(1,\ldots, 1,t^{b_n})$. 
The vertices $v_1,\ldots,v_n$ of $\hull(N)$ are labeled by
$z_1^{b_1},\ldots, z_n^{b_n}$, respectively, 
and we assume the faces are oriented so that the simplex $\sigma$ with vertices
$v_{i_1}, \ldots, v_{i_\ell}$ equals $[v_{i_1},\ldots, v_{i_\ell}]$ if ${i_1} < \ldots < {i_\ell}$. 
Then the corresponding cellular complex $\F_{\hull(N)}$ is the Koszul complex
of $(z_1^{b_1},\ldots,z_n^{b_n})$, and 
\begin{equation}\label{svart}
R^{\hull(N)}=\dbar \left [\frac{1}{z_n^{b_n}}\right ]\wedge\cdots \wedge 
\dbar \left [\frac{1}{z_1^{b_1}}\right ] e_\Delta \otimes e^*_\emptyset,
\end{equation} 
cf.\ Section ~\ref{oriental} and Example ~\ref{koszulex}. Note that a different orientation
of the top-dimensional  simplex $\Delta=[v_1,\ldots, v_n]$ would permute the residue factors
in \eqref{svart}.
\end{ex} 

The example shows that the hull complex of the complete intersection ideal
 is the cellular complex consisting of an $(n-1)$-simplex together with its
faces. In general, if $M$ is Artinian, $\hull(M)$ is a polyhedral subdivision of such an
$(n-1)$-simplex or, rather, it can be embedded as one, see, e.g., (the
proof of) Theorem~4.31 in \cite{MS}. 
We will need the following more precise description of this embedding.
To begin with, we note that an Artinian monomial ideal has monomials
of the form $z_1^{\beta_1},\ldots,z_n^{\beta_n}$ among its minimal monomial generators.
Note also that every other minimal generator has degree smaller than $\beta_i$ in ~$z_i$.

\begin{prop} \label{hullembedding}
    Let $M$ be an Artinian monomial ideal with $(z_1^{b_1},\ldots,z_n^{b_n})$
    among its minimal monomial generators. Let $N$ be the complete intersection ideal
    $(z_1^{b_1},\ldots,z_n^{b_n})$. Then $\hull(M)$ can be embedded as a refinement
    of $\hull(N)$ as labeled polyhedral cell complexes.
\end{prop}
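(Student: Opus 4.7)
The central observation is that every minimal generator $z^\alpha$ of $M$ other than a pure power $z_i^{b_i}$ satisfies $\alpha_i < b_i$ for each $i$, since otherwise some $z_i^{b_i}$ would divide $z^\alpha$, contradicting minimality. In particular, every label appearing in $\hull(M)$ divides $z_1^{b_1}\cdots z_n^{b_n}$.

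My strategy is to embed $\hull(M)$ inside $\Delta := |\hull(N)|$ by radial projection from the origin. The vertices $v_i = t^{b_i e_i}$ of $\hull(N)$ all lie on the affine hyperplane $H = \{x \in \R^n : \ell(x) = d\}$, where $\ell(x) = \sum_i x_i/(t^{b_i}-1)$ and $d = 1 + \sum_i 1/(t^{b_i}-1)$; one checks directly that $\ell(v_j) = d$ for each $j$. Let $\pi(x) = (d/\ell(x))\,x$ denote radial projection onto $H$ from the origin. Then $\pi(v_i) = v_i$, and for $t$ sufficiently large and any non-pure-power minimal generator $z^\alpha$, a direct asymptotic computation using $\alpha_j - b_j < 0$ shows that each coordinate of $p_\alpha := \pi(t^\alpha)$ exceeds $1$; consequently, each barycentric coordinate $(p_{\alpha,i}-1)/(t^{b_i}-1)$ of $p_\alpha$ in $\Delta$ is strictly positive, so $p_\alpha$ lies in the relative interior of $\Delta$. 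Define $\Phi:|\hull(M)|\to\Delta$ to agree with $\pi$ on vertices and to extend affinely on each cell of $\hull(M)$.

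Label compatibility then follows cleanly. Suppose a face $\sigma'$ in $\Phi(\hull(M))$ is contained in a face $\sigma\in\hull(N)$ spanned by $\{v_i:i\in I\}$. Either $I=\{1,\ldots,n\}$, in which case $\sigma=\Delta$ and $m_{\sigma'}$ divides $\prod_i z_i^{b_i}=m_\sigma$ by the central observation; or $\sigma\subsetneq\Delta$ lies in $\partial\Delta$ and cannot contain any interior point $p_\alpha$, forcing every vertex of $\sigma'$ to be some $v_i$ with $i\in I$, whence $m_{\sigma'}$ divides $\prod_{i\in I}z_i^{b_i}=m_\sigma$.

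The main obstacle is to verify that $\Phi$ is a homeomorphism onto $\Delta$ whose image is a polyhedral subdivision, i.e., that the affinely-extended cells tile $\Delta$ and meet only along common faces. Here I would combine two inputs: first, the combinatorial fact, essentially \cite[Theorem~4.31]{MS}, that $\hull(M)$ is a polyhedral subdivision of the $(n-1)$-simplex, which prescribes the face incidences; second, the observation that $\pi$ itself is a topological homeomorphism from the bounded boundary of $\mathcal{P}_t(M)$ onto $\Delta$ for large $t$, with $\Delta$ appearing as the ``top cap'' of the bounded polytope $\mathcal{P}_t(M)\cap\{\ell\leq d\}$. For $t$ sufficiently large, the affine extension $\Phi$, which matches $\pi$ on vertices, differs from $\pi$ on each cell only by a small perturbation that preserves the combinatorial type of the subdivision.
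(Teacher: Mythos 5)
Your embedding is centered at the wrong point, and this breaks the refinement property. The paper's embedding (following the proof of Theorem~4.31 in [MS]) projects radially from ${\bf 1}=(1,\ldots,1)$, not from the origin. The difference is essential: a vertex of $\hull(M)$ whose label $z^\alpha$ has $\alpha_i=0$ lies in the hyperplane $\{x_i=1\}$, and since ${\bf 1}$ also lies in that hyperplane, projection from ${\bf 1}$ keeps the vertex in $\{x_i=1\}$ and therefore sends it to the boundary face $[v_1,\ldots,\widehat{v_i},\ldots,v_n]$ of $\Delta$ --- which is exactly where it must land for the images of the cells to tile $\Delta$ and for each face of $\hull(N)$ to be a union of image faces. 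Your map instead sends \emph{every} non-pure-power vertex into the relative interior of $\Delta$; you prove this yourself when you show all coordinates of $p_\alpha$ exceed $1$. Concretely, take $M=(z_1^2,z_2^2,z_3^2,z_1z_2)$: the vertex $v_{12}=(t,t,1)$ labeled $z_1z_2$ subdivides the edge between $v_1$ and $v_2$ in $\hull(M)$ (there is no face of $\hull(M)$ with vertex set $\{v_1,v_2\}$), but under your $\pi$ its image has third coordinate $(t^2+2)/(2t+1)>1$ and so is interior to $\Delta$. Consequently the edge $[v_1,v_2]$ of $\hull(N)$ is not a union of faces of your embedded complex, the top-dimensional images do not cover $\Delta$, and the final paragraph's appeal to [MS, Theorem~4.31] cannot rescue this, since that theorem concerns a different (compatible) embedding. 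Your dichotomy in the label-compatibility step ("a proper face $\sigma\subsetneq\Delta$ contains no $p_\alpha$, hence every vertex of $\sigma'$ is some $v_i$") is a symptom of this defect rather than an argument: the same example (or the ideal $\m^2$ of Example~\ref{maxkvadrat}) shows that boundary faces of $\Delta$ genuinely must contain vertices other than the $v_i$.

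Two further points. First, "extend affinely on each cell" is not well defined for non-simplicial cells and is unnecessary: a central projection is a projective transformation, so it carries each polytope $\sigma$ (on which the relevant linear form is positive) onto the polytope spanned by the images of its vertices; one simply takes the faces of the embedded complex to be $\pi(\sigma)$, as the paper does. Second, with the correct center ${\bf 1}$ the label compatibility is proved essentially as in your $\sigma=\Delta$ case, but for a proper face $\sigma=[v_{i_1},\ldots,v_{i_k}]$ the correct mechanism is that $\pi(\sigma')\subseteq\sigma$ forces the vertices of $\sigma'$ into $\{x_i=1\}$ for $i\notin\{i_1,\ldots,i_k\}$, hence $\alpha_i=0$ in their labels; combined with your (correct) "central observation" that every minimal generator has degree at most $b_i$ in $z_i$, this yields $m_{\sigma'}\mid z_{i_1}^{b_{i_1}}\cdots z_{i_k}^{b_{i_k}}=m_\sigma$.
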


We will be sloppy and not always distinguish between the hull complex of $M$ and
its embedding.

\begin{proof}
    That $\hull(M)$ refines $\hull(N)$ as polyhedral cell complexes
    is Theorem~4.31 in \cite{MS}. In fact, it follows from the proof 
    in \cite{MS} of that theorem that
    it is a refinement also as \emph{labeled} polyhedral cell complexes.
    To see this, we begin by recalling (slightly differently described) the construction
    of the embedding in that proof. 

    We know from Example~\ref{sporty} that $\hull(N)$ consists of the faces
    of the simplex $\Delta$ with vertices 
    $v_1=(t^{b_1}, 1, \ldots, 1), \ldots, v_n=(1,\ldots, 1,t^{b_n})$. 
    For a point $p \neq {\bf 1} := (1,\ldots,1)$, with $p_i \geq 1$, consider the line $\ell$
    through ${\bf 1}$ and $p$. Since $p_i \geq 1$, $\ell$ intersects $\Delta$ in a unique
    point, which we denote $\pi(p)$. Moreover, since $|\hull(M)|$ is contained in the set where $p_i \geq 1$,
    we get a map $\pi : |\hull(M)| \to \Delta$, which induces an embedding
    of $\hull(M)$ into $\Delta$ by letting the faces of the embedded complex be the
    images $\pi(\sigma)$, where $\sigma \in \hull(M)$ (with the same labeling).

    Consider a face $\sigma'$ of $\hull(M)$ such that
    $\pi(\sigma') \subseteq \sigma = [v_{i_1},\ldots,v_{i_k}]$. Then the vertices of
    $\pi(\sigma')$ must be contained in the set $\{ x \in \R^n \mid x_i = 1, i \neq i_1,\ldots,i_k \}$,
    since the $v_{i_j}$ are. 
    A vertex $v$ of $\hull(M)$ with label $m_v = z^\alpha$ has coordinates
    $(t^{\alpha_1},\ldots,t^{\alpha_n})$, so if $\pi(v)$ is contained in $\{ x_i = 1 \}$,
    then we must have $\alpha_i = 0$ in $m_v$. 
    It follows that $m_{\sigma'}$ is of the form
    $m_{\sigma'} = z_{i_1}^{\alpha_{i_1}}\ldots z_{i_k}^{\alpha_{i_k}}$,
    and since each label of a minimal monomial generator is of degree at most $b_i$
    in $z_i$, the same must hold for $m_{\sigma'}$ since it is the common multiple of such labels.
    Hence, $m_{\sigma'} | m_\sigma = z_{i_1}^{b_{i_1}}\ldots z_{i_k}^{b_{i_k}}$.
\end{proof}

Recall that a graded free resolution $A_\bullet, \varphi_\bullet$ 
is \emph{minimal} if and only if for each ~$k$, ~$\varphi_k$ maps a basis
of ~$A_k$ to a minimal set of generators of $\Im \varphi_k$, see,
e.g., \cite[Corollary~1.5]{E2}. 
The hull resolution is not minimal in general, cf.
Example ~\ref{maxkvadrat}. However, if $M$ is a \emph{generic} monomial
ideal in the sense of \cite{BPS, MSY}, 
the hull complex is simplicial, i.e., all faces are simplices, and it coincides with the 
\emph{Scarf complex} of $M$, which is a minimal
resolution of $S/M$, see \cite{BPS}. 
 The ideal $M$ is generic if whenever two distinct minimal
 generators ~$m_i$ and 
~$m_j$ have the same positive degree in some variable, then there
exists a third generator ~$m_k$ that strictly divides the least common
multiple $z^\alpha$ of ~$m_i$ and ~$m_j$, meaning that $m_k$ divides
$z_1^{\alpha_1-1}\cdots z_n^{\alpha_n-1}$. Note that when $n\leq 2$ all monomial ideals are generic. The Scarf complex of $M$ 
is the collection of subsets
$I\subset\{1,\ldots,r\}$ whose corresponding least common multiple
~$m_I:=\lcm_{i\in I} m_i$ is unique.

\section{Proof of Theorem~\ref{main}}\label{bevis}
We will prove a slightly more general version of Theorem~\ref{main}. 
If $N$ is a complete intersection ideal $(z_1^{b_1},\ldots,z_n^{b_n})$,
by Example~\ref{sporty}, $\hull(N)$ is the polyhedral cell complex consisting of the faces
of an oriented $(n-1)$-simplex $\Delta$, with vertices labeled by $z_1^{b_1},\ldots, z_n^{b_n}$.
In particular, $\hull(N)_{n-1}$ consists of only the simplex $\Delta$.

\begin{thm}\label{main2}
Let $M$ be an Artinian monomial ideal in $S=\C[z_1,\ldots, z_n]$. 
Assume that $\F_X$ is a cellular resolution of $S/M$ such that the underlying labeled polyhedral
cell complex $X$ refines the hull complex of a complete intersection ideal
$N = (z_1^{b_1},\ldots,z_n^{b_n})$, i.e., the $(n-1)$-simplex $\Delta$ with vertices labeled by
$z_1^{b_1},\ldots,z_n^{b_n}$.
Then the associated residue current $R^X$ has one entry
$R_\sigma$ for each $(n-1)$-dimensional face $\sigma$ of $X$, and 
\begin{equation*}
R_\sigma=\sign(\sigma,\Delta) 
\dbar\left [\frac{1}{z_n^{\alpha_n}}\right]\w\cdots\w 
\dbar\left [\frac{1}{z_1^{\alpha_1}}\right],
\end{equation*}
where $z_1^{\alpha_1}\cdots z_n^{\alpha_n}$ is the label of $\sigma$.
\end{thm}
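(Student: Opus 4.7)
The plan is to apply the comparison formula of Theorem~1.3 of~\cite{L} (equation \eqref{tricket}) to relate $R^X$ to the residue current of the Koszul complex of $N$, which by Example~\ref{sporty} is explicitly known. Concretely, take $E_\bullet=\F_X$ and $F_\bullet=\F_{\hull(N)}$; both resolve Artinian (hence Cohen-Macaulay, codimension $n$) ideals, and $N\subset M$ since the generators $z_1^{b_1},\dots,z_n^{b_n}$ of $N$ belong to $M$. By Example~\ref{sporty},
\[
R^{\hull(N)}=\dbar\Big[\tfrac{1}{z_n^{b_n}}\Big]\w\cdots\w\dbar\Big[\tfrac{1}{z_1^{b_1}}\Big]\, e_\Delta\otimes e_\emptyset^*.
\]

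The main technical input is Proposition~\ref{ana}, which provides an explicit chain map $a_\bullet\colon \F_{\hull(N)}\to\F_X$ lifting the identity $a_{-1}\colon S\to S$. Guided by the refinement, the natural formula is
\[
a_k(e_\sigma)=\sum_{\sigma'\in X_k,\;\sigma'\subset\sigma}\sign(\sigma',\sigma)\,\frac{m_\sigma}{m_{\sigma'}}\,e_{\sigma'},
\]
which makes sense since $X$ refines $\hull(N)$ \emph{as labeled complexes}, so $m_{\sigma'}$ divides $m_\sigma$. Verifying that $a_\bullet$ commutes with the differentials $\varphi_\bullet$ and $\psi_\bullet$ reduces, for each $\tau'\in X_{k-1}$ with $\tau'\subset\sigma\in\hull(N)_k$, to the identity
\[
\sum_{\sigma'\in X_k\colon\tau'\subset\sigma'\subset\sigma}\sign(\sigma',\sigma)\sign(\tau',\sigma')=\sum_{\tau\in\hull(N)_{k-1}\colon\tau'\subset\tau\subset\sigma}\sign(\tau,\sigma)\sign(\tau',\tau).
\]
If $\tau'$ lies in the relative interior of $\sigma$, the right side is empty and the left side is a two-term cancellation by Lemma~\ref{cancel} applied to $X$ restricted to $\sigma$. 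If $\tau'$ lies on a facet $\tau$ of $\sigma$, each side is a single term and the equality is Lemma~\ref{forfining}. This sign bookkeeping is the technical heart of the argument and the main reason Section~\ref{komplex} was prepared.

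With $a_\bullet$ established, the comparison formula \eqref{tricket} yields
\[
R^X=a_{n-1}R^{\hull(N)}=\sum_{\sigma\in X_{n-1}}\sign(\sigma,\Delta)\,\frac{z^b}{z^\alpha}\,\dbar\Big[\tfrac{1}{z_n^{b_n}}\Big]\w\cdots\w\dbar\Big[\tfrac{1}{z_1^{b_1}}\Big]\,e_\sigma\otimes e_\emptyset^*,
\]
where $z^\alpha=m_\sigma$, and $\alpha_j\le b_j$ because $m_\sigma$ divides $m_\Delta=z^b$. A slot-wise application of the one-variable identity $z_j^{b_j-\alpha_j}\dbar[1/z_j^{b_j}]=\dbar[1/z_j^{\alpha_j}]$, a special case of the Leibniz rule \eqref{leibnizregeln}, then collapses the coefficient of $e_\sigma\otimes e_\emptyset^*$ to $\sign(\sigma,\Delta)\,\dbar[1/z_n^{\alpha_n}]\w\cdots\w\dbar[1/z_1^{\alpha_1}]$, which is the desired formula.
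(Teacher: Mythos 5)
Your proposal is correct and follows essentially the same route as the paper: Proposition~\ref{ana} with the explicit chain map $a_k(e_\sigma)=\sum\sign(\sigma',\sigma)\frac{m_\sigma}{m_{\sigma'}}e_{\sigma'}$, commutativity verified via Lemmas~\ref{fallen}, \ref{forfining} and \ref{cancel} exactly as in the paper's case split (boundary versus interior $\tau'$), and then the comparison formula \eqref{tricket} together with Example~\ref{sporty} and the one-variable collapse $z_j^{b_j-\alpha_j}\dbar[1/z_j^{b_j}]=\dbar[1/z_j^{\alpha_j}]$. No substantive differences.
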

Theorem~\ref{main} corresponds to the case when $X$ equals $\hull (M)$; 
the refinement is given by Proposition~\ref{hullembedding}, and the orientation
of $\hull(M)$ is implicitly assumed to be such that $\sgn(\sigma, \Delta)=1$ for
each $\sigma\in \hull (M)_{n-1}$.

\begin{prop}\label{ana}
Let $X$ and $Y$ be oriented labeled polyhedral cell complexes such that
$X$ refines $Y$, and let $E_\bullet,\varphi_\bullet$ and $F_\bullet,\psi_\bullet$
be the corresponding vector bundle complexes. 
For $k\geq -1$ let $a_k:F_k\to E_k$ be the mapping 
\begin{equation}\label{gronprick}
a_k: e_\sigma\mapsto \sum_{\sigma'\subset \sigma}\sign(\sigma',\sigma)
\frac{m_\sigma}{m_{\sigma'}} e_{\sigma'},
\end{equation}
where the sum is over all $\sigma'\in X_{k}$ that satisfy 
$\sigma'\subset \sigma \in Y_k$. 
Then the $a_k$ are holomorphic and the diagram \eqref{lediagram} commutes.
\end{prop}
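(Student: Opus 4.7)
\medskip\noindent
\textbf{Proof plan.}
Holomorphicity of $a_k$ is immediate from the labeled refinement hypothesis: for $\sigma \in Y_k$ and $\sigma' \in X_k$ with $\sigma' \subset \sigma$ we have $m_{\sigma'} \mid m_\sigma$, so $m_\sigma/m_{\sigma'}$ is a monomial. The real content is the commutativity $\varphi_k \circ a_k = a_{k-1} \circ \psi_k$. I would expand both sides on a basis element $e_\sigma$, $\sigma \in Y_k$, using \eqref{sabel} and \eqref{gronprick}, and collect the coefficients of each $e_{\tau'}$, $\tau' \in X_{k-1}$. After pulling out the common factor $m_\sigma/m_{\tau'}$, matching the two sides reduces to the signed-sum identity
\begin{equation*}
\sum_{\sigma'} \sign(\sigma',\sigma)\sign(\tau',\sigma') \;=\; \sum_{\tau} \sign(\tau,\sigma)\sign(\tau',\tau),
\end{equation*}
for each $\tau' \in X_{k-1}$ with $\tau' \subset \sigma$, where the left sum runs over $\sigma' \in X_k$ with $\sigma' \subset \sigma$ having $\tau'$ as a facet, and the right sum runs over facets $\tau \in Y_{k-1}$ of $\sigma$ containing $\tau'$.

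Since $X$ refines $Y$, the faces of $X$ contained in $\sigma$ form a polyhedral subdivision $X|_\sigma$ of the convex polytope $\sigma$, so Lemmas~\ref{fallen} and~\ref{cancel} apply to it. I would split into two cases. If $\tau'$ is not contained in $\partial\sigma$, Lemma~\ref{fallen} produces exactly two faces $\sigma'_1,\sigma'_2 \in X_k$ with $\sigma'_j \subset \sigma$ having $\tau'$ as a facet, and Lemma~\ref{cancel} forces the left sum to vanish; the right sum is empty because $\tau'$ lies in no facet of $\sigma$ in $Y$. If instead $\tau' \subset \partial\sigma$, then since $\dim \tau' = k-1$ equals the dimension of each facet of $\sigma$ while two distinct facets of $\sigma$ intersect in dimension at most $k-2$, $\tau'$ lies in a unique facet $\tau \in Y_{k-1}$ of $\sigma$; Lemma~\ref{fallen} likewise gives a unique $\sigma' \in X_k$ with $\sigma' \subset \sigma$ having $\tau'$ as a facet. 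Each sum collapses to a single term, and the required equality $\sign(\sigma',\sigma)\sign(\tau',\sigma') = \sign(\tau,\sigma)\sign(\tau',\tau)$ is precisely Lemma~\ref{forfining} applied with the pair $(Y,X)$ playing the role of $(X,X')$ there.

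The main obstacle is the boundary case above, especially the uniqueness of the facet $\tau \in Y_{k-1}$ containing $\tau'$: besides the codimension-$1$ count, one must check that any $(k-1)$-dimensional $\tau' \subset \sigma$ lying on $\partial\sigma$ is actually contained in \emph{some} facet of $\sigma$, which follows by an affine-hull argument (the open portion of $\tau'$ meeting $\text{relint}(\tau)$ forces $\spann \tau' = \spann \tau$ and hence $\tau' \subset \sigma \cap \text{aff}(\tau) = \tau$). The degenerate cases $k=0$ (and formally $k=-1$), where Lemma~\ref{fallen} does not apply, follow immediately from the sign conventions in Section~\ref{oriental}: vertices of $Y$ are vertices of $X$ with identical labels, so $a_0$ sends $e_\sigma$ to $e_\sigma$, $a_{-1}$ is the identity on $\Ok(E_{-1})=\Ok(F_{-1})$, and the bottom square commutes by direct inspection.
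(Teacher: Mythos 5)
Your proof is correct and follows essentially the same route as the paper: compare coefficients of $e_{\tau'}$ on both sides, use Lemma~\ref{fallen} and Lemma~\ref{forfining} when $\tau'$ lies in (a facet of) $\partial\sigma$, use Lemma~\ref{fallen} and Lemma~\ref{cancel} for the interior cancellation, and treat $k=0$ separately. (Only a cosmetic remark: for $k=0$ the refinement hypothesis gives $m_{\sigma'}\mid m_\sigma$ rather than equality of labels, but the bottom square commutes either way since $\varphi_0\circ a_0(e_\sigma)=(m_\sigma/m_{\sigma'})m_{\sigma'}e_\emptyset=m_\sigma e_\emptyset$.)
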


We let $X$ and $N$ be as in Theorem~\ref{main2}, and $Y = \hull(N)$.
Since $\dim X= \dim Y = n-1$, the complexes $E_\bullet,\varphi_\bullet$ and
$F_\bullet,\psi_\bullet$ end at level $n-1$. 
Thus, identifying $E_{-1}$ and $F_{-1}$ and taking Proposition~\ref{ana} for granted,
\eqref{tricket} yields  
\[
R^X=R^E=a_{n-1} R^F = 
\sum_{\sigma\subset\Delta} \sign (\sigma,\Delta) \frac{m_\Delta}{m_\sigma} 
\dbar \left [\frac{1}{z_n^{b_n}}\right ]\w\cdots\w 
\dbar \left [\frac{1}{z_1^{b_1}}\right ] e_\sigma\otimes
e^*_\emptyset; 
\]
here we have used \eqref{svart} for the last equality. Since
$|X|=|Y| = \Delta$, the sum is over all $\sigma\in X_k$, and 
since $m_\Delta=z_1^{b_1}\cdots z_n^{b_n}$ the coefficient of
$e_\sigma\otimes e^*_\emptyset$ is
just 
\[
\sign (\sigma,\Delta) 
\dbar \left [\frac{1}{z_n^{\alpha_n}}\right ]\w\cdots\w 
\dbar \left [\frac{1}{z_1^{\alpha_1}}\right ] , 
\]
where $z_1^{\alpha_1}\cdots z_n^{\alpha_n}=m_\sigma$. This concludes
the proof of Theorem~\ref{main2}.

\begin{proof}[Proof of Proposition ~\ref{ana}]
Since $X$ refines $Y$ as a labeled polyhedral cell complex, each $m_\sigma/m_{\sigma'}$ in \eqref{gronprick} is holomorphic and thus the $a_k$ are holomorphic. 

To show that \eqref{lediagram} commutes, we first consider the case $k\geq 1$. 
Pick $\sigma\in Y_k$. 
Then 
\begin{equation}\label{afsigma}
e_\sigma\stackrel{\psi_k}{\longmapsto}
\sum_{\tau\subset\sigma}\sign(\tau,\sigma)\frac{m_\sigma}{m_\tau}e_\tau
\stackrel{a_{k-1}}{\longmapsto} 
\sum_{\tau\subset\sigma}\sum_{\tau'\subset\tau}
\sign(\tau,\sigma)\sign(\tau',\tau)\frac{m_\sigma}{m_{\tau'}}e_{\tau'}.
\end{equation}
Here the first sum is over the facets $\tau\in Y_{k-1}$ of
$\sigma$, and the second sum is over the faces $\tau'\in X_{k-1}$ that are
contained in $\tau$. 
Moreover 
\begin{equation}\label{fasigma}
e_\sigma\stackrel{a_k}{\longmapsto}
\sum_{\sigma'\subset\sigma}\sign(\sigma',\sigma)\frac{m_\sigma}{m_{\sigma'}}e_{\sigma'}
\stackrel{\varphi_{k}}{\longmapsto} 
\sum_{\sigma'\subset\sigma}\sum_{\tau'\subset\sigma'}
\sign(\sigma',\sigma)\sign(\tau',\sigma')\frac{m_\sigma}{m_{\tau'}}e_{\tau'}.
\end{equation}
Now the first sum is over the faces $\sigma'\in X_k$ that are contained in
$\sigma$, whereas the second sum is over the facets $\tau'\in X_{k-1}$ of
$\sigma'$.

Let $X^\sigma$ be the $k$-dimensional subcomplex 
of faces of $X$ that are contained in $\sigma$ and consider $\tau'\in
X^\sigma_{k-1}$. Note that $X$ being a refinement of $Y$ means that $X^\sigma$
is a polyhedral subdivision of $\sigma$.
Assume that $\tau'$ is contained in a facet $\tau$ of
$\sigma$. Since $\dim \tau'=k-1=\dim\tau$, there is a unique such
$\tau$, and thus the coefficient of $e_{\tau'}$ (in the rightmost
expression) in \eqref{afsigma}
equals  
$\sign(\tau,\sigma)\sign(\tau',\tau)\frac{m_\sigma}{m_{\tau'}}$. 
Moreover, $\tau'$ is contained in the boundary of $|X^\sigma|$ and
thus by Lemma ~\ref{fallen} there is a unique $\sigma'\in
X^\sigma_k$ such that $\tau'\subset\sigma'$. Therefore the coefficient of $e_{\tau'}$ (in the rightmost
expression) in \eqref{fasigma} is 
$\sign(\sigma',\sigma)\sign(\tau',\sigma')\frac{m_\sigma}{m_{\tau'}}$. 
By Lemma ~\ref{forfining} these coefficients coincide.

If $\tau'$ is not contained in any facet $\tau$ of $\sigma$, then
clearly the coefficient 
of $e_{\tau'}$ in \eqref{afsigma} 
is zero. Also, then $\tau'$ is not contained in the boundary of $X^\sigma$,
and thus 
by Lemma ~\ref{fallen}, $\tau'$ is a facet of exactly two faces $\sigma'_1,
\sigma'_2\in X^\sigma_k$. Hence the coefficient of $e_{\tau'}$ in \eqref{fasigma} is 
\[
\big ( \sign(\sigma'_1,\sigma)\sign(\tau',\sigma'_1)+
\sign(\sigma'_2,\sigma)\sign(\tau',\sigma'_2) \big )
\frac{m_\sigma}{m_{\tau'}}, 
\]
which by Lemma ~\ref{cancel} vanishes. 
Since the sums in \eqref{afsigma} and
\eqref{fasigma} are only over $\tau',\sigma'\in X$ that are in
$X^\sigma$, it follows that $a_{k-1}\circ \psi_k (e_\sigma)=\varphi_{k}\circ a_k
(e_\sigma)$.

For $k=0$, pick a vertex $\sigma\in Y_0$. Since $X$ is a polyhedral subdivision of 
$Y$ and $\sigma$ is a vertex, the only $\sigma' \in X_0$ with $\sigma' \subset \sigma$
is $\sigma' = \sigma$. 
Thus $\varphi_0\circ a_0(e_\sigma)=
\varphi_0(m_\sigma/m_{\sigma'} e_{\sigma'}) = m_\sigma e_\emptyset$. Note that $a_{-1}$ maps
$e_\emptyset$ to $e_\emptyset$. Thus $a_{-1}\circ \psi_0 (e_\sigma)= m_\sigma
e_\emptyset$.

We conclude that $a_{k-1}\circ \psi_k =\varphi_{k}\circ a_k$ for
$k\geq 0$; in other words, the diagram \eqref{lediagram} commutes. 
\end{proof}

\section{Comparison to previous results}\label{examples}

In \cite{W3} the current $R=R^X$ constructed from a cellular resolution
$\F_X$ of an Artinian monomial ideal $M$ was computed up to
multiplicative constants; Proposition 3.1 in \cite{W3} asserts that $R$ has one entry
$R_\sigma$ 
for each face $\sigma\in X_{n-1}$, which is of the form 
\begin{equation}\label{gamla}
R_\sigma=c_\sigma 
\dbar\left [\frac{1}{z_n^{\alpha_n}}\right]\w\cdots\w 
\dbar\left [\frac{1}{z_1^{\alpha_1}}\right]
\end{equation}
for some $c_\sigma\in\C$, where $z_1^{\alpha_1}\cdots z_n^{\alpha_n}$
is the label of $\sigma$. The main novelty in this paper, except for the
new proof, is that we show that $c_\sigma=1$ (or $-1$, depending on
the orientation of $X$) and thus give a complete
description of ~$R$.

Let $\ann R\subset\Ok^n_0$ denote the \emph{annihilator ideal} of $R$,
i.e., the ideal of germs of holomorphic functions $\xi$ at $0\in\C^n$ that satisfy $R\xi=0$. 
Note that $\ann R_\sigma=(z_1^{\alpha_1}, \ldots,
z_n^{\alpha_n})=:\m^\alpha$. A monomial ideal of this form is said to be
\emph{irreducible}. Each monomial ideal $M$ can be
written as finite intersection of irreducible ideals; this is called an
\emph{irreducible decomposition} of $M$.
Since one has to annihilate each $R_\sigma$ in order to annihilate
$R$, Theorem~\ref{main2} implies that, provided $X$ is a polyhedral
subdivision of $\Delta$,
\[
\ann R=\bigcap_{\sigma\in X_{n-1}} \m^{\alpha_\sigma}, 
\]
which gives an irreducible decomposition of $\ann R=M$. Here
$\alpha_\sigma$ is the multidegree of the label of $\sigma$. 
If $\F_X$ is a minimal resolution of $M$ this decomposition is \emph{irredundant} in the sense that
no intersectand can be omitted. Each monomial ideal has a unique (monomial) irredundant irreducible decomposition.

Using that $R$ satisfies the duality
principle and results \cite[Theorem~3.7]{BPS} and
\cite[Theorem~5.42]{MS} about irreducible decompositions, in \cite{W3}, we could in some cases determine which
$c_\sigma$ are 
nonzero. If $M$ is a generic monomial ideal, Theorem~3.3 in that
paper says that $c_\sigma$ is
nonzero if and only if $\sigma$ is in the Scarf complex $\Delta_M$
(which is a subcomplex of any cellular resolution of $M$),
and if $\F_X$ is a minimal resolution of $M$ each $c_\sigma$ is
nonzero by Theorem~3.5 in \cite{W3}. 
Let us look at an example where 
these theorems do not apply.

\begin{ex}\label{maxkvadrat}
Consider the ideal 
$M=(z_1^2, z_1z_2, z_1z_3, z_2^2, z_2z_3, z_3^2)\subset S=\C[z_1,z_2,z_3]$, 
i.e., the square of the maximal ideal at $0$ in $S$. 
The hull complex of $M$ is a refinement 
of the $2$-simplex $\Delta$ with the vertices labeled by $z_1^2, z_2^2, z_3^2$, see Figure ~\ref{kvadratbild}. 

\begin{figure}
\begin{center}
\includegraphics{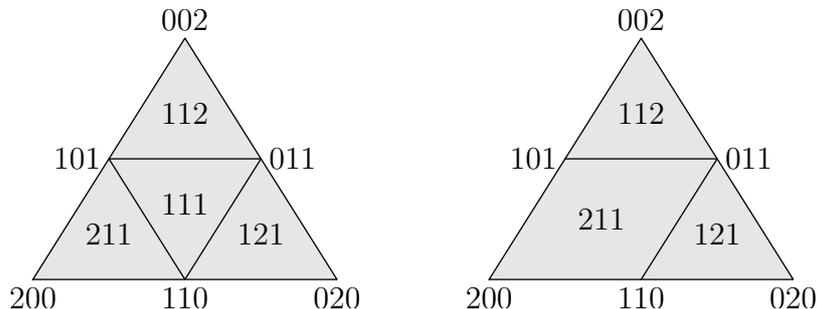}
\caption{The hull complex of the ideal $M$ in Example ~\ref{maxkvadrat}
  (labels on vertices and $2$-faces) (left) and the cell complex of a minimal free
  resolution of $M$ (right).}
  \label{kvadratbild}
\end{center}
\end{figure}

There are four faces $\sigma_1,\ldots, \sigma_4$ in $\hull_2(M)$ with
labels $m_{\sigma_1}=z_1^2z_2z_3$, $m_{\sigma_2}=z_1z_2^2z_3$, 
$m_{\sigma_3}=z_1z_2z_3^3$, and $m_{\sigma_4}=z_1z_2z_3$.  
By Theorem ~\ref{main2}, the current $R$ therefore has four entries: three entries of the
form 
$R_{\sigma_\ell}=\pm \dbar[1/z_k^2]\w\dbar[1/z_j]\w\dbar[1/z_i]$
for $\ell=1,2,3$, corresponding to the three corner triangles in $\hull(M)$, and one
component $R_{\sigma_4}=\dbar[1/z_3]\w\dbar[1/z_2]\w\dbar[1/z_1]$.

The hull resolution is not a minimal resolution of $S/M$. In
particular, 
$M$ is not generic. 
By arguing as in the proofs of Theorems 3.3 and 3.5 in \cite{W3}, using that
$R$ satisfies the duality principle and that 
$M=(z_1^2, z_2, z_3)\cap(z_1, z_2^2, z_3)\cap (z_1, z_2, z_3^2)$ 
is the irredundant irreducible decomposition of $M$, 
one can conclude that first three $c_{\sigma_j}$ in \eqref{gamla} are non-zero, 
but not that $c_{\sigma_4}$ is.

A minimal resolution of $S/M$ is obtained by removing one of the edges 
of the inner triangle in $\hull (M)$, see,
e.g., \cite[Example~3.19]{MS}. The cell complex $X$ of one such resolution is depicted in
Figure~\ref{kvadratbild}. 
Note that $X$ is a refinement of $\Delta$ 
(although different from $\hull(M)$) 
so that Theorem~\ref{main2} applies; the corresponding residue current consists of the three
entries $R_{\sigma_1}$, $R_{\sigma_2}$, and $R_{\sigma_3}$ above. 
\end{ex}

In \cite{W3} the current $R$ is computed as the push-forward of a
current on a toric log-resolution of $M$. The computations are inspired
by \cite{W}, where Bochner-Martinelli residue currents, cf. Example ~\ref{koszulex}, of monomial
ideals are computed, and they become quite involved. The coefficients
$c_\sigma$ appear as certain integrals in the log-resolution and seem
to be hard to
compute in general. 
The proof of Theorem~\ref{main2} given here is more direct and much less technical than in \cite{W3}.

It would be interesting to investigate whether the comparison formula
for residue currents could be used also to compute Bochner-Martinelli
residue currents. In \cite{W} it was shown that if $M$ is an Artinian
monomial 
ideal, the Bochner-Martinelli current $R_{BM}^M$ of (a monomial sequence of
generators of) $M$ is a vector-valued
current with entries of the form \eqref{gamla}, for certain exponents
$\alpha$. In some cases we can compute the 
coefficients $c_\sigma$, e.g., if $n=2$ and each minimal generator
of the monomial ideal $M$ is a vertex of the so-called Newton polytope of $M$; the coefficients
are then equal to $\pm 1$, see \cite[Section~4.2]{W4}. 

If $E_\bullet, \varphi_\bullet$ is the Koszul complex of $M$ and
$F_\bullet, \psi_\bullet$ is the Koszul complex of a complete
intersection ideal $(z_1^{\beta_1},\ldots, z_n^{\beta_n})$ contained in $M$, it is not hard to
explicitly find mappings $a_k$ so that the diagram \eqref{lediagram}
commutes. 
Indeed, let $m_1,\ldots, m_r$ be a minimal set of generators of $M$,
ordered so that $m_j=z_j^{\alpha_j}$ for $j=1,\ldots, n$; note that
there are such generators since $M$ is Artinian. Identify the set of
generators with a section $\sum m_j e_j$ of a (trivial) rank $r$
bundle $\widetilde E$. Similarly identify $z_1^{\beta_1}, \ldots,
z_n^{\beta_n}$ with a section $\sum z_j^{\beta_j}\epsilon_j$ of a rank
$n$ bundle $\widetilde F$ and construct the Koszul complexes
$E_\bullet,\varphi_\bullet$ and $F_\bullet, \psi_\bullet$ as in
Example ~\ref{koszulex}. Now we can choose $a_{k-1}:\Lambda^k
\widetilde F^*\to \Lambda^k \widetilde E^*$ as the mapping 
$a_{k-1}:\epsilon_{i_1}^*\w\cdots\w \epsilon_{i_k}^*\mapsto
z_{i_1}^{\beta_{i_1}-\alpha_{i_1}}\cdots
z_{i_k}^{\beta_{i_k}-\alpha_{i_k}} e_{i_1}^*\w\cdots\w e_{i_k}^*$.  
Theorem~3.2 in \cite{L} then 
gives a formula relating the currents $R^E=R_{BM}^M$ and $R^F$, the latter given
by \eqref{svart}. However, when $M$ is not a complete intersection and
thus $E$ does not end at level $n-1$, the formula relating the currents is more involved than
\eqref{tricket}; there appears an extra term, which seems hard to
compute in general, see \cite[Equation~(3.2)]{L}.

\subsection{Non-Artinian monomial ideals}\label{higher} 
In \cite{W3} we also computed residue currents (up to nonvanishing factors) associated with cellular
resolutions of non-Artinian
monomial ideals. 

The method in this paper is not as well adapted to resolutions of
non-Artinian ideals. First, to be able to use the simple form \eqref{tricket}
of the comparison formula for residue currents it is important that $M$
is Cohen-Macaulay. 
Second, even if $M$ is Cohen-Macaulay, there is in general no such
natural (resolution of an) ideal to compare with as the monomial complete
intersection ideal $N=(z_1^{b_1},\ldots, z_n^{b_n})$ in the Artinian case.

\begin{ex}\label{cohen}
    Let $M$ be the ideal $M = (z_1z_2,z_1z_3,z_2z_3)$ in $S = \C[z_1,z_2,z_3]$.
Then  
\begin{equation}\label{nunu}
0 \longrightarrow S^{\oplus 2} 
\xrightarrow[]{ \left[ \begin{array}{cc} -z_3 & 0 \\ z_2 & -z_2 \\ 0 &
     z_1 \end{array} \right] }
S^{\oplus 3} 
\xrightarrow[]{ \left[ \begin{array}{ccc} z_1z_2 & z_1z_3 &
           z_2z_3 \end{array} \right]}
        S
\end{equation}
is a free resolution of $M$. 
Let $E_\bullet, \varphi_\bullet$ be the corresponding vector bundle
complex. 
Next, let $f$ be the regular sequence $f_1=z_1z_2,
f_2=(z_1+z_2)z_3$, and let $F_\bullet, \psi_\bullet$ be the Koszul
complex of $f$. Then it is not hard to explicitly find the morphisms
$a_1, a_0$, and $a_{-1}$. Since the ideals $M$ and $(f_1,f_2)$ are
Cohen-Macaulay we may apply the comparison formula
\eqref{tricket}. A computation gives 
\begin{multline*}
R^E= 
\frac{1}{z_1} \dbar \frac{1}{z_3} \w \dbar \frac{1}{z_2}
\left[\begin{array}{c} 1\\ 0 \end{array} \right]  +
\frac{1}{z_2} \dbar \frac{1}{z_3} \w \dbar \frac{1}{z_1}
\left[\begin{array}{c} 1 \\ 1 \end{array} \right] +  
 \frac{1}{z_3} \dbar\frac{1}{z_2} \w \dbar \frac{1}{z_1}
 \left[\begin{array}{c} 0 \\ 1\end{array} \right].
    \end{multline*}
Observe that $R$ is not symmetric in $z_1$ and $z_2$, although the ideal
$M$ is. This is, however, not too surprising, since the resolution
\eqref{nunu} is not symmetric in $z_1$ and $z_2$. 
\end{ex}

A general strategy for computing the residue current associated with the
resolution $E_\bullet,\varphi_\bullet$ of a
(monomial) Cohen-Macaulay ideal $M$ of codimension $p$ is to look for
a regular sequence $f_1,\ldots, f_p$ contained in $M$ and then apply the comparison
formula \eqref{tricket} to $E_\bullet,\varphi_\bullet$ and the Koszul complex $F_\bullet,
\psi_\bullet$ of $f$. One way of finding such a regular sequence is to consider $p$ sufficiently generic linear
combinations $f_1,\ldots, f_p$ of the 
generators of $M$, as was done in Example ~\ref{cohen}. 
However, when
the $f_j$ are not monomials the computation of the
 current $R^F=R_{CH}^f$ can become much more involved. Also,
although the complex $F_\bullet,\psi_\bullet$ is simple, it
may be hard to find the morphism $a_k$ in general.

If $E_\bullet,\varphi$ is a resolution of a non-Cohen-Macaulay
ideal, the comparison formula in \cite{L} is more involved than
\eqref{tricket}. For computations of residue currents in this case,
see \cite[Section~5]{L}.

\section{Relations to fundamental cycles}\label{fundamental}

Our original motivation for computing the coefficients $c_\sigma$ of
the entries \eqref{gamla} of 
$R^X$ was that we wanted to understand the current
\begin{equation}\label{dfr}
    D\varphi \circ R:= D\varphi_0\circ\cdots\circ D\varphi_{p-1}\circ R, 
\end{equation}
when $R=R^E$ is the residue current associated with a resolution
\eqref{karv} of an
ideal sheaf $\a$ of codimension $p$ and $D$ is the connection on $\End
E$ induced by connections on $E=\bigoplus E_k$.

Let $\a$ be a complete intersection ideal, defined by a regular
sequence $f_1,\ldots, f_p$ and let \eqref{bunt} be the Koszul complex of
$f_j$, see Example ~\ref{koszulex}, equipped with the trivial metrics 
so that $D$ is the trivial connection $d$.  
Then \eqref{dfr} equals $p!$ times the current 
\begin{equation}\label{dfch}
R^f_{CH}\w df_1\w\cdots \w df_p= (2\pi i)^p[\a],
\end{equation}
where $[\a]$ is the current of integration along the \emph{fundamental
  cycle} of $\a$. The equality \eqref{dfch} was proved in \cite{CH}. 
Recall that for an Artinian ideal $\a \subseteq \Ok_0^n$, the fundamental cycle
of $\a$ is $[\a] = m[0]$, where $m=\dim_\C\Ok^n_0/\a$ is the \emph{geometric multiplicity}
of $\a$. For an arbitrary ideal $\a$, with irreducible components $Z_i$ (i.e., irreducible
components of the radical ideal of $\a$), the fundamental cycle of $\a$ is $[\a] = \sum m_i [Z_i]$
where $m_i$ are the geometric multiplicities of $\a$ along $Z_i$.
The geometric multiplicity $m_i$ of $\a$ along $Z_i$ can be defined as the geometric
multiplicity of the Artinian ideal $\a + \b$, where $\b$ is the ideal of a generic
smooth variety transversal to $Z_i$. For more details regarding fundamental cycles,
see \cite[Section 1.5]{F}.

Using the comparison formula for residue currents from \cite{L}, 
we recently managed to prove that 
\begin{equation}\label{monique}
D\varphi\circ R=p! (2\pi i)^p [\a]
\end{equation}
for any resolution \eqref{karv} of any equidimensional ideal (i.e., all
minimal primes are of the same dimension) $\a\subset \O_0^n$, 
thus generalizing \eqref{dfch}. 
This factorization of the fundamental cycle is closely related to a
result by Lejeune-Jalabert, \cite{L-J}, who proved a cohomological
version of \eqref{monique} for Cohen-Macaulay ideals, and it will be
the subject of the forthcoming paper \cite{LW}.

For the residue current associated with the hull resolution of a
generic Artinian monomial ideal we can give an alternative proof of
\eqref{monique} (with the trivial connection $d$) using Theorem~\ref{main}. In fact, we get a 
refinement of \eqref{monique}:  For each permutation $s_1,\ldots, s_n$
of $1,\ldots, n$, 
\begin{equation}\label{forfinad}
\frac{\partial f_1}{\partial z_{s_1}} ~dz_{s_1}\circ \cdots \circ \frac{\partial
  f_n}{\partial z_{s_n}} ~dz_{s_n} \circ R = c_n (2\pi i)^n[\a],  
\end{equation} 
where $c_n=(-1)^{n^2}\cdot (-1)^{\frac{n(n-1)}{2}}$. For an
explanation of why the constant $c_n$ appears in the right hand side of
\eqref{forfinad}, but not in \eqref{monique}, see \cite{LW}. 
We will show how this works when $n=2$. For $n\geq 3$, the computation
of $d\varphi$ gets more involved; the general case will therefore be
treated in the separate paper \cite{W5}.

First, let us describe the geometric multiplicity $\dim_\C\Ok_0^n/M$ of a monomial ideal $M\subset\Ok_0^n$. 
Let $\R_+$ denote the nonnegative real numbers and let 
$T_M$ be the \emph{staircase} 
$\R^n_+\setminus \bigcup_{z^\alpha\in M}\{\alpha+\R^n_+\}$ of $M$. If
$M$ is Artinian, then $T_M$ is a bounded set in $\R^n_+$. The name
staircase is motivated by the shape of $T_M$. 
If $n=2$ each Artinian monomial ideal $M$
is of the form $M=(z^{a_1}w^{b_1}, \ldots, z^{a_r}w^{b_r})$ for some integers 
$a_1>\ldots >a_r=0$ and $0=b_1<\ldots <b_r$. Then $T_M$ looks like a
staircase 
with inner corners $(a_j,b_j)$ and outer corners $(a_j, b_{j+1})$,
see Figure ~\ref{trappor}. 
\begin{figure}
\begin{center}
\includegraphics{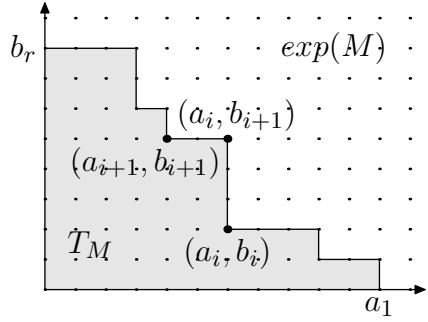}
\caption{The staircase $T_M$ of an Artinian monomial ideal in $\C^2$. The lattice points above $T_M$ are the exponents $\text{exp}(M)$ of monomials in $M$.}
\label{trappor}
\end{center}
\end{figure}
In general there is an ``inner corner'' $\alpha$  
for each minimal generator $z^\alpha$ of $M$ and one ``outer
corner'' $\alpha$ for each intersectand $\m^\alpha$ in the
irredundant irreducible decomposition. 
 If $M$ is generic, there is a one-to-one correspondence between faces $\sigma\in \hull(M)_{n-1}$, with labels $m_{\sigma}=z^{\alpha_\sigma}$, and outer corners $\alpha$ in $T_M$. 
The points in $\Z^n\cap T_M$ are precisely the exponents of monomials that are not in $M$. In other words, 
$\Ok_0^n/M =\spann_\C\{z^\alpha\mid \alpha\notin T_M\}$. It follows
that $\dim_\C\Ok_0^n/M=\Vol(T_M)$, where $\Vol$ is the usual Euclidean
volume in $\R^n$.

Now assume that $n=2$, and that $M$ is an Artinian ideal, minimally
generated by $z^{a_i}w^{b_i}$, $a_1>\ldots >a_r=0$ and $0=b_1 <\ldots
<b_r$. Then $\hull (M)$ is one-dimensional, with one vertex $v_i$ for
each generator $z^{a_i}w^{b_i}$ and one edge $\sigma_i$, with label
$z^{a_i}w^{b_{i+1}}$, for each outer corner $(a_i,b_{i+1})$ in $T_M$. 
The mappings in $\F_{\hull(M)}$ are given by 
$\varphi_0: e_{v_i}\mapsto z^{a_i}w^{b_i}e_\emptyset$ and 
$\varphi_1: e_{\sigma_i}\mapsto z^{a_i-a_{i+1}} e_{v_{i+1}}-
w^{b_{i+1}-b_i} e_{v_i}$ and 
by Theorem~\ref{main}, 
\[
R=R^{\hull(M)}=\sum_{i=1}^{r-1}\dbar\left
  [\frac{1}{w^{b_{i+1}}}\right ]\w \dbar\left [\frac{1}{z^{a_i}}\right ] e_{\sigma_i}\otimes e^*_\emptyset.
\]

Let us compute $\frac{\partial\varphi_0}{\partial z}dz\circ
\frac{\partial\varphi_1}{\partial w}dw\circ R$. 
Note that 
\[
\frac{\partial\varphi_0}{\partial z} dz = 
\sum_{i=1}^r a_i z^{a_i}w^{b_i} \frac{dz}{z} e_{v_i}^*\otimes e_\emptyset
\]
and
\[
\frac{\partial\varphi_1}{\partial w} dw = 
- \sum_{i=1}^{r-1} (b_{i+1}-b_i) w^{b_{i+1}-b_i} \frac{dw}{w}
 e_{\sigma_i}^*\otimes e_{v_i}, 
\]
so that 
\begin{equation*}
-\frac{\partial\varphi_0}{\partial z}dz\circ \frac{\partial\varphi_1}{\partial w}dw=
\sum_{i=1}^{r-1} a_i(b_{i+1}-b_i)z^{a_i} w^{b_{i+1}}\frac{dz}{z}\w\frac{dw}{w} e_{\sigma_i}^*\otimes e_\emptyset. 
\end{equation*}
Let $P_i=\{x\in T_M \mid 0 \leq x_1 < a_i, b_i \leq x_2 < b_{i+1} \}$
for $i = 1,\dots,r-1$. 
Then the $P_i$ form a partition of $T_M$, cf. Figure
\ref{partitioner} and, in particular, $\Vol(T_M)=\sum\Vol(P_i)$. 
Note that $\Vol(P_i)=a_i(b_{i+1}-b_i)$. Hence (identifying $e^*_\emptyset \otimes e_\emptyset$ with
$1$)
\begin{multline*}
-\frac{\partial\varphi_0}{\partial z}dz\circ \frac{\partial\varphi_1}{\partial w}dw\circ R=
\sum_{i=1}^{r-1} \Vol(P_i) z^{a_i} w^{b_{i+1}}\frac{dz}{z} \w
\frac{dw}{w}\w 
\dbar\left
  [\frac{1}{w^{b_{i+1}}}\right ] \w \dbar\left [\frac{1}{z^{a_i}}\right ]= 
\\
\sum _{i=1}^{r-1} \Vol(P_i) ~\dbar\left [\frac{1}{w}\right
]\w\dbar\left [\frac{1}{z}\right ] \w dz\w dw =
(2\pi i)^2 \Vol(T_M) [0],  
\end{multline*}
so we have proved \eqref{forfinad} (for $z_{s_1}=z$ and $z_{s_2}=w$).

\begin{figure}
\begin{center}
\includegraphics{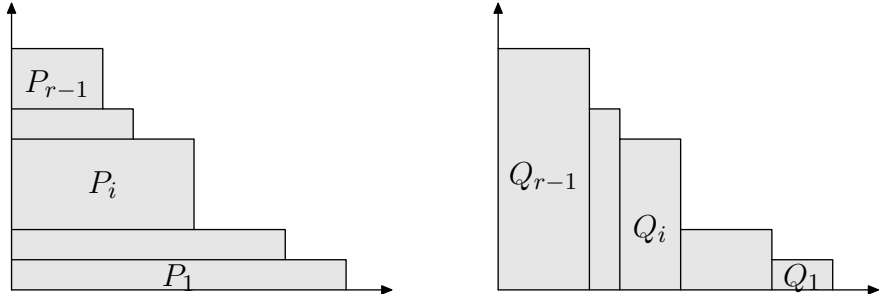}
\caption{Partitions of $T_M$ as rectangles $P_i$ and rectangles
  $Q_i$.}
\label{partitioner}
\end{center}
\end{figure}

By similar arguments we get that 
$-\frac{\partial\varphi_0}{\partial w}dw\circ
\frac{\partial\varphi_1}{\partial z}dz\circ R= \sum _{i=1}^{r-1}
\Vol(Q_i) (2\pi i)^2[0]$, where $Q_i= \{x\in
T_M \mid a_{i+1} \leq x_1 < a_i, 0 \leq x_2 < b_{i+1} \}$ for
$i=1,\ldots, r-1$, see Figure ~\ref{partitioner}. 
Again, the rectangles $Q_i$ form a partition of $T_M$ and thus
\eqref{forfinad} holds also for this permutation ($z_{s_1}=w$ and
$z_{s_2}=z$) of the variables. To conclude, we have proved
\eqref{monique} for hull resolutions of monomial ideals in dimension
$2$ with $D=d$.

For a generic Artinian monomial ideal $M\subset \Ok_0^n, n\geq 3$
one can analogously define cuboids $P_{\alpha, s}$, where $\alpha$ is
an outer corner of $T_M$ and $s$ is a permutation $s_1, \ldots, s_n$ of $1,\ldots, n$, such that for a fixed $s$, $\{P_{\alpha,s}\}_\alpha$ defines a partition of $T_M$ and moreover
\begin{equation*}
\frac{\partial\varphi_0}{\partial z_{s_1}}dz_{s_1}\circ \cdots\circ 
\frac{\partial\varphi_{n-1}}{\partial z_{s_n}}dz_{s_n} =
\sum_{\sigma\in\hull(M)_{n-1}} \Vol(P_{\alpha_\sigma, s}) z^{\alpha_\sigma} \frac{dz_1}{z_1}\w\cdots\w\frac{dz_n}{z_n} e_{\sigma}^*\otimes e_\emptyset.
\end{equation*}
Together with Theorem~\ref{main} this proves \eqref{forfinad} and thus
\eqref{monique} in this case. 
However, for $n\geq 3$, the construction of the $P_{\alpha, s}$ is
more delicate than that of $P_i$ and $Q_i$, see \cite{W5}.

\def\listing#1#2#3{{\sc #1}:\ {\it #2},\ #3.}


\begin{thebibliography}{9999}

\bibitem{A}\listing{M.\ Andersson}
{Residue currents and ideals of holomorphic functions}
{Bull.\ Sci.\ Math. {\bf 128} (2004) no. 6, 481--512}

\bibitem{AS}\listing{M.\ Andersson \& H.\ Samuelsson}
{A Dolbeault-Grothendieck lemma on complex spaces via Koppelman
  formulas} 
{Invent. Math. {\bf 190} (2012), no. 2, 261--297}


\bibitem{AS2}\listing{M.\ Andersson \& H.\ Samuelsson}
{Weighted Koppelman formulas and the $\overline\partial$-equation on
  an analytic space}
{J.\ Funct.\ Anal.  {\bf 261}  (2011), 777--802} 


\bibitem{ASS}\listing{M.\ Andersson \& H.\  Samuelsson \& J.\ Sznajdman}
{On the Brian\c con-Skoda theorem on a singular variety}
{Ann.\ Inst.\ Fourier {\bf 60} (2010), 417--432}

\bibitem{AW}\listing{M.\ Andersson \& E.\ Wulcan}
{Residue currents with prescribed annihilator ideals}
{Ann.\ Sci.\ École Norm.\ Sup. {\bf 40} (2007) no. 6, 985--1007}

\bibitem{AW2}\listing{M.\ Andersson \& E.\ Wulcan}
{Decomposition of residue currents}
{J.\ Reine Angew.\ Math. {\bf 638} (2010), 103--118}

\bibitem{AW4}\listing{M.\ Andersson \& E.\ Wulcan}
{On the effective membership problem on singular varieties}
{Preprint, arXiv:1107.0388}


\bibitem{BaS}\listing{D.\ Bayer \& B.\ Sturmfels}
{Cellular resolutions of monomial modules}
{J.\ Reine Angew.\ Math. {\bf 502} (1998) 123--140}


\bibitem{BPS}\listing{D.\ Bayer \& I.\ Peeva \& B.\ Sturmfels}
{Monomial resolutions}
{Math.\ Res.\ Lett.\  {\bf 5}  (1998),  no. 1-2, 31--46}

\bibitem{CH}\listing{N.r.\ Coleff \& M.e.\ Herrera}
{Les courants r\'esiduels associ\'es \`a une forme m\'eromorphe}
{Lect. Notes in Math. {\bf 633},  Berlin-Heidelberg-New York (1978)}


\bibitem{DS}\listing{A.\ Dickenstein  \& C.\ Sessa}
{Canonical representatives in moderate cohomology}
{Invent. Math. {\bf 80} (1985),  417--434}


\bibitem{E}\listing{D.\ Eisenbud}
{Commutative algebra. With a view toward algebraic geometry}
{Graduate Texts in Mathematics, 160. Springer-Verlag, New York, 1995}{}{}{}


\bibitem{E2}\listing{D.\ Eisenbud}
{The geometry of syzygies. A second course in commutative algebra and algebraic geometry}
{Graduate Texts in Mathematics, 229. Springer-Verlag, New York, 2005}

\bibitem{F}\listing{W.\ Fulton}
{Intersection theory. Ergebnisse der Mathematik und ihrer Grenzgebiete}
{Springer-Verlag, Berlin, 1984}


\bibitem{L}\listing{R.\ L\"ark\"ang} 
{A comparison formula for residue currents}
{Preprint, arXiv:1207.1279}

\bibitem{LW}\listing{R.\ L\"ark\"ang \& E.\ Wulcan} 
{Residue currents and fundamental cycles}
{In preparation}


\bibitem{L-J}\listing{M.\ Lejeune-Jalabert} 
{Remarque sur la classe fondamentale d'un cycle}
{C.\ R.\ Acad.\ Sci.\ Paris Sér. I Math.  {\bf 292}  (1981), no. 17, 801--804}



\bibitem{MS}\listing{E.\ Miller \& B.\ Sturmfels}
{Combinatorial commutative algebra} 
{Graduate Texts in Mathematics {\bf 227} Springer-Verlag, New York, 2005}

\bibitem{MSY}\listing{E.\ Miller \& B.\ Sturmfels \& K.\ Yanagawa}
{Generic and cogeneric monomial ideals, Symbolic computation in algebra, analysis, and geometry (Berkeley, CA, 1998)}
{J.\ Symbolic Comput. {\bf 29} (2000) no 4--5, 691--708}


\bibitem{P}\listing{M.\ Passare}
{Residues, currents, and their relation to ideals of holomorphic functions} 
{Math.\ Scand.\ {\bf 62} (1988), no. 1, 75--152}

\bibitem{P2}\listing{M.\ Passare}
{A calculus for meromorphic currents}
{J.\ Reine Angew.\ Math. {\bf 392} (1988), 37--56}


\bibitem{PTY}\listing{M.\ Passare \& A.\ Tsikh \&  A.\ Yger}
{Residue currents of the Bochner-Martinelli type}
{Publ.\ Mat.  {\bf 44} (2000), 85--117}

\bibitem{Sp}\listing{E.\ Spanier}
{Algebraic topology}
{McGraw-Hill Book Co., New York-Toronto, Ont.-London 1966} 


\bibitem{Sz}\listing{J.\ Sznajdman}
{A residue calculus approach to the uniform Artin-Rees lemma}
{Israel J. Math., to appear} 

\bibitem{Ta}\listing{J.\ L.\ Taylor}
{Several complex variables with connections to algebraic geometry and Lie groups}
{Graduate Studies in Mathematics, {\bf 46} American Mathematical Society, Providence, RI, 2002}

\bibitem{W}\listing{E.\ Wulcan}
{Residue currents of monomial ideals}
{Indiana Univ.\ Math.\ J. {\bf 56} (2007), no.\ 1, 365--388}

\bibitem{W3}\listing{E.\ Wulcan}
{Residue currents constructed from resolutions of monomial ideals}
{Math.\ Z. {\bf 262} (2009), 235--253} 

\bibitem{W4}\listing{E.\ Wulcan}
{On weighted Bochner-Martinelli residue currents} 
{Math.\ Scand., {\bf 110} (2012), 18--34}

\bibitem{W5}{{\sc E. Wulcan}:\ In preparation.}

\bibitem{Z}\listing{G.\ Ziegler}
{Lectures on polytopes}
{Graduate Texts in Mathematics {\bf 152} Springer Verlag, New York, 1995}

\end{thebibliography}
\end{document}